\newfont{\footsc}{cmcsc10 at 8truept}
\newfont{\footbf}{cmbx10 at 8truept}
\newfont{\footrm}{cmr10 at 10truept}
\theoremstyle{plain}
\newtheorem{theorem}{Theorem}[section]
\newtheorem{proposition}[theorem]{Proposition}
\newtheorem{lemma}[theorem]{Lemma}
\newtheorem{corollary}[theorem]{Corollary}
\theoremstyle{definition}
\newtheorem{definition}[theorem]{Definition}
\newtheorem{remark}[theorem]{Remark}
\newcommand{\N}{{\mathbb{N}}}
\newcommand{\R}{{\mathbb{R}}}
\def\bs0{\bf 0}
\providecommand{\customgenericname}{}
\newcommand{\newcustomtheorem}[2]{%
	\newenvironment{#1}[1]
	{%
		\renewcommand\customgenericname{#2}%
		\renewcommand\theinnercustomgeneric{##1}%
		\innercustomgeneric
	}
	{\endinnercustomgeneric}
}
\title{A Generalized Faulhaber Inequality, Improved Bracketing Covers, and Applications to Discrepancy}
\author{Michael Gnewuch\thanks{Institut f\"ur Mathematik, Universit\"at Osnabr\"uck, 
Germany ({\tt michael.gnewuch@uni-osnabrueck.de}).}
\and{Hendrik Pasing\thanks{Institut Naturwissenschaften, Hochschule Ruhr West, Germany  ({\tt hendrik.pasing@hs-ruhrwest.de}).}}
\and{Christian Wei{\ss}\thanks{Institut Naturwissenschaften, Hochschule Ruhr West, Germany  ({\tt christian.weiss@hs-ruhrwest.de}).}}
}
\begin{document}

\maketitle
\vskip 1pc

\begin{abstract}
We prove a generalized Faulhaber inequality to bound the sums of the $j$-th powers of the first $n$ (possibly shifted) natural numbers. With the help of this inequality we are able to improve the known bounds for bracketing numbers of $d$-dimensional axis-parallel boxes anchored in $0$
(or, put differently, of lower left orthants intersected with the $d$-dimensional unit cube $[0,1]^d$). We use these improved bracketing numbers to establish new bounds for the star-discrepancy of negatively dependent random point sets and its expectation. 
We apply our findings also to the weighted star-discrepancy. 
\end{abstract}

{\bf Keywords:} Faulhaber's formula, sums of powers, bracketing number, covering number, negative correlation, Monte Carlo point sets, pre-asymptotic bound, tractability, star-discrepancy, weighted star-discrepancy.

\section{Introduction} 
Already in the 17th century, Faulhaber found a formula to express the $p$-th power of the first $n$ positive integers. His work did not get much recognition until it was rediscovered in the second half of the 20th century, see most notably \cite{Knu93}. Nowadays, Faulhaber's formula is a standard tool in $p$-adic analysis and a research topic on its own, see e.g. \cite{Lev19} and \cite{BMV20} for two recent examples. In this paper, we demonstrate an application of Faulhaber sums in a new context: we derive a generalized Faulhaber inequality to improve known bounds for bracketing numbers of $d$-dimensional axis-parallel boxes anchored in $0$. Recall that Faulhaber's formula is for $j \in \mathbb{N}$ given by
$$\sum_{i=1}^n i^j = \frac{n^{j+1}}{j+1} + \frac{n^j}{2} + \sum^j_{k=2} (-1)^k \frac{B_k}{k!} (j)_{k-1}n^{j-k+1},$$
where $B_k$ is the $k$-th Bernoulli number and $(j)_{k-1}:=\frac{j!}{(j-k+1)!}$ the Pochhammer symbol. Also note that all odd Bernoulli numbers with $k \geq 3$ are equal to $0$. Furthermore, an easy calculation yields that the coefficient of the power $n^{j-1}$ on the right hand side is $\tfrac{j}{12}$.\\[12pt]
However, we are not faced with simple sums of positive integers in the context of bracketing numbers. Instead a shifted version, namely sums of the form $\sum_{i=1}^n (i+r)^j$ with some shift $0 \le r \le 1$, appears. As a first main result we therefore derive the following 
\textbf{generalized Faulhaber inequality}: 
	\begin{equation*}
	\sum_{i=1}^n (i+r)^j \leq \frac{(n+r)^{j+1}}{j+1} + \frac{(n+r)^j}{2} + \frac{j(n+r)^{j-1}}{12},
	\end{equation*}
see Theorem~\ref{prop:Faulhaber}. Note that the inequality only involves positive coefficients on both sides. This is an essential technical feature which we need in our analysis of bracketing numbers. However, its potential use is not limited to the topic of this paper but might be applied to improve other (discrepancy) bounds.\\[12pt]

Let us now describe the other main results of this paper in more detail. To this purpose we have to state the necessary definitions and concepts.

\paragraph{Bracketing numbers.} The concept of bracketing numbers is well known in empirical process theory, see, e.g., \cite{vdVW96}.
	We use here the definitions and notations as presented in \cite{DGS05} and \cite{Gne08}: Let $A \subset [0,1]^d$. For any $\delta \in (0,1]$ a finite set of points $\Gamma \subset [0,1]^d$ is called a \textbf{$\delta$-cover of $A$}, if for every $y \in A$, there exist $x,z \in \Gamma \cup \left\{ 0 \right\}$ such that $x \leq y \leq z$ and $\lambda_d([0,z]) - \lambda_d([0,x]) \leq \delta$, where  $\lambda_d$ denotes the $d$-dimensional Lebesgue-measure. For $x,y \in \mathbb{R}^d$, the expression $x \leq y$ is meant as a component-wise inequality. If $x \leq y$ the interval $[x,y]$ is defined by $[x,y] := [x_1,y_1] \times [x_2,y_2] \times \ldots \times [x_d,y_d]$. We use corresponding notation for (half-)open $d$-dimensional intervals. The number $N(A,\delta)$ denotes the smallest cardinality of a $\delta$-cover in $A$. The \textbf{bracketing number $N_{[\ ]}(A,\delta)$ of $A$} is the smallest number of closed axis-parallel boxes (or ``brackets'') of the form $[x,y]$ with $x,y \in [0,1]^d$, satisfying $\lambda_d([0,y]) - \lambda_d([0,x]) \le \delta$,
	whose union contains $A$.
	It is easily checked that the  numbers $N(A,\delta)$ and $N_{[\ ]}(A,\delta)$ satisfy the relations
	$$N(A,\delta) \le 2N_{[\ ]}(A,\delta) \le  (N(A,\delta) + 1)N(A,\delta).$$
	We will rely on the first inequality. Furthermore, we set $N_{[\ ]}(d,\delta):=N_{[\ ]}([0,1]^d,\delta)$. In dimension $d=1$, the identity $N_{[\ ]}(1,\delta) = \lceil \delta^{-1} \rceil = N([0,1]^d, \delta)$ holds as can be seen by the help of the $\delta$-cover $\Gamma:=\left\{ 1/ \lceil \delta^{-1} \rceil, 2 /\lceil \delta^{-1} \rceil, \ldots \right\}$. For higher dimension, the following theorem was proven in \cite{Gne08}.

\begin{theorem}[\cite{Gne08}, Theorem 1.15] \label{thm:brackgen1} For any $d \geq 1$ and $1\ge \delta > 0$ we have
	$$N_{[\ ]}(d,\delta) \leq 2^{d-1} \frac{d^d}{d!} (\delta^{-1}+1)^d.$$
	
\end{theorem}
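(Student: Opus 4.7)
The plan is to argue by induction on the dimension $d$. The base case $d=1$ is the identity $N_{[\ ]}(1,\delta)=\lceil\delta^{-1}\rceil\le\delta^{-1}+1$ already recorded above, which matches the claimed bound $2^{0}\cdot 1^{1}/1!\cdot(\delta^{-1}+1)$.

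For the inductive step I fix $d\ge 2$ and an integer $n>1/\delta$ (to be optimized at the end), partition the first coordinate axis into $n$ equal slabs with endpoints $\xi_j:=j/n$, and cover each slab $[\xi_{j-1},\xi_j]\times[0,1]^{d-1}$ by product brackets $[\xi_{j-1},\xi_j]\times[x',y']$, where $[x',y']$ ranges over a $(d-1)$-dimensional bracketing cover of $[0,1]^{d-1}$ with mesh $\eta_j$. Writing out the volume condition gives
\[
\xi_j\lambda_{d-1}([0,y'])-\xi_{j-1}\lambda_{d-1}([0,x'])=\xi_j\bigl(\lambda_{d-1}([0,y'])-\lambda_{d-1}([0,x'])\bigr)+\tfrac{1}{n}\lambda_{d-1}([0,x'])\le \tfrac{j}{n}\eta_j+\tfrac{1}{n},
\]
so the choice $\eta_j:=(n\delta-1)/j$ makes every product bracket $\delta$-admissible. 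The inductive hypothesis then yields
\[
N_{[\ ]}(d,\delta)\le\sum_{j=1}^n N_{[\ ]}(d-1,\eta_j)\le\frac{2^{d-2}(d-1)^{d-1}}{(d-1)!\,(n\delta-1)^{d-1}}\sum_{j=1}^n(j+n\delta-1)^{d-1}.
\]

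I would bound the inner sum by the crude monotone-function estimate $\sum_{j=1}^n(j+n\delta-1)^{d-1}\le\int_1^{n+1}(x+n\delta-1)^{d-1}\,{\rm d}x\le n^d(1+\delta)^d/d$. Setting $t:=n\delta$ and simplifying, the desired bound $2^{d-1}d^d/d!\cdot(\delta^{-1}+1)^d$ reduces to the single univariate claim $t^d/(t-1)^{d-1}\le 2d^d/(d-1)^{d-1}$.

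Differentiating shows that $f(t):=t^d/(t-1)^{d-1}$ attains its unique minimum on $(1,\infty)$ at $t=d$ with value $d^d/(d-1)^{d-1}$, leaving a factor $2$ of slack at the minimum. The choice $n:=\lceil d/\delta\rceil$ places $t=n\delta\in[d,d+1)$, and a direct evaluation (e.g.\ comparing $f(d+1)=(d+1)^d/d^{d-1}$ with $2f(d)$) confirms that the ratio stays well below $2$ for all $d\ge 2$, completing the induction. The main potential obstacle is the integral bound for the sum $\sum_j(j+n\delta-1)^{d-1}$: it is wasteful, and replacing it with the generalized Faulhaber inequality of this paper is precisely the mechanism that will later sharpen the leading constant.
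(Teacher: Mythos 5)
Your inductive slab decomposition is correct and gives the claimed bound, but it is a genuinely different route from the one in \cite{Gne08} that the paper builds on. You fix a number $n$ of equal slabs along the first coordinate, cover each slab by products $[\xi_{j-1},\xi_j]\times[x',y']$ with a $(d-1)$-dimensional bracketing cover of mesh $\eta_j=(n\delta-1)/j$, and optimize the resulting univariate expression in $t=n\delta$. The proof of Theorem~1.15 in \cite{Gne08}, by contrast, uses a diagonal-type decomposition (as in Section~\ref{SubSec:2.1} for $d=2$) whose induction step produces the double sum
$$N_{[\ ]}(d,\delta)\le \sum_{k=1}^{d-1}\binom{d}{k}\,2^{d-k}\frac{d^{d-k}}{(d-k)!}\sum_{i=1}^{n-1}(\delta^{-1}-i+1)^{d-k}+\delta^{-1}+1,\qquad n=\lceil\delta^{-1}\rceil,$$
with a summation over \emph{all} powers $d-k$, not only over the top power $d-1$ as in your recursion. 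The inner sums there have shift $r=\delta^{-1}-n+1\in(0,1]$, which is exactly why Theorem~\ref{prop:Faulhaber} (stated for $0\le r\le1$) is applicable and why the paper can sharpen the leading factor from $2^{d-1}$ all the way to $\max(1.1^{d-101},1)$. In your decomposition the analogous shift is $n\delta-1\approx d-1$, which is well outside $[0,1]$ for $d\ge3$, so the generalized Faulhaber inequality as stated would not plug in directly; your final sentence overstates how transferable that refinement is. So both proofs are valid, but yours is more elementary while the paper's is tailored to absorb the later improvement.

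Two small points you should still address to make your argument airtight. First, for small $j$ you may have $\eta_j=(n\delta-1)/j>1$; the definition of $N_{[\ ]}$ restricts to $\delta\in(0,1]$, so you should either cap $\eta_j$ at $1$ (noting $N_{[\ ]}(d-1,1)=1$) or remark that the inequality $N_{[\ ]}(d-1,\eta_j)\le 2^{d-2}\tfrac{(d-1)^{d-1}}{(d-1)!}(\eta_j^{-1}+1)^{d-1}$ holds trivially in that regime since the right-hand side is $\ge1$. Second, for the final verification it helps to record the one-line estimate
$$\frac{f(d+1)}{f(d)}=\frac{(d+1)^d(d-1)^{d-1}}{d^{2d-1}}=\frac{d+1}{d}\left(\frac{d^2-1}{d^2}\right)^{d-1}<\frac{d+1}{d}\le\frac{3}{2}<2\qquad(d\ge2),$$
so that the factor-$2$ slack survives the rounding $n=\lceil d/\delta\rceil$ uniformly in $d$.
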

The result was further improved in \cite{PW20} to:
\begin{theorem}[\cite{PW20}, Proposition 2.3] \label{thm:brackgen2} For any $d \geq 1$ and $1\ge \delta > 0$ we have
	$$N_{[\ ]}(d,\delta) \leq 2^{d-2} \frac{d^d}{d!} (\delta^{-1}+1)^d + \frac{1}{2} \left( \delta^{-1} + 1 \right).$$
\end{theorem}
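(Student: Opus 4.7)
I would construct the bracketing cover explicitly on a regular lattice and count the brackets with the help of the generalized Faulhaber inequality (Theorem~\ref{prop:Faulhaber}). Set $m := \lceil \delta^{-1}\rceil$, so that $m \le \delta^{-1}+1$ and $1/m \le \delta$, and work on the grid $\Gamma_m := \{ k/m : k \in \{0,\ldots,m\}^d\}$. For each multi-index $k = (k_1,\ldots,k_d) \in \{1,\ldots,m\}^d$, I would try to associate a bracket of the form $[(k-e_{j^*(k)})/m,\, k/m]$ obtained by decrementing only the coordinate $j^*(k)$ that minimizes $k_{j^*}$ (equivalently, maximizes $\prod_{i \neq j^*}k_i$). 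Such a single-step drop is admissible (volume difference $\le \delta$) if and only if $\prod_{i \neq j^*}k_i \le m^{d} \delta$. The covering property is verified by noting that any $y \in [0,1]^d$ with ceiling vector $k(y) := (\lceil my_1\rceil, \ldots, \lceil my_d\rceil)$ lies in the bracket attached to $k(y)$ whenever the latter is admissible.

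For the counting, I would group multi-indices by permutation equivalence. Assuming $k_1 \le k_2 \le \ldots \le k_d$, admissibility becomes $k_1k_2\cdots k_{d-1} \le m^{d}\delta$, and the total number of admissible brackets equals $d!$ times the number of sorted tuples satisfying this bound (with appropriate care for ties). Writing this count as an iterated sum
\[
\sum_{k_1=1}^{m}\sum_{k_2=k_1}^{m}\cdots \sum_{k_{d-1}=k_{d-2}}^{m}\#\{k_d : k_{d-1} \le k_d \le m\}
\]
restricted to the product constraint, I would apply the generalized Faulhaber inequality layer by layer. Each inner sum of the form $\sum_{k_j=k_{j-1}}^{m}(k_j+r)^{j-1}$ is bounded by $\tfrac{(m+r)^{j}}{j} + \tfrac{(m+r)^{j-1}}{2} + \tfrac{(j-1)(m+r)^{j-2}}{12}$; crucially \emph{all} three contributions carry positive coefficients, so they can be safely cascaded without cancellation. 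Iterating $d-1$ times yields the leading $(d^d/d!)(\delta^{-1}+1)^d$ dependence.

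The main obstacle, and the source of the factor-$2$ improvement over Theorem~\ref{thm:brackgen1}, is capturing the sharpness in the leading term: one has to argue that roughly half of the single-step brackets are \emph{redundant}, either because their upper corner $k/m$ is already covered by a deeper-drop admissible bracket, or through a symmetry pairing of multi-indices (for example a pairing between $k$ and the reflected index $(m{+}1{-}k)$ within suitable subclasses). The "exceptional" multi-indices for which no single-step drop is admissible (i.e.\ $\prod_{i \neq j}k_i > m^d\delta$ for every $j$) force all $k_i$ to be large; these I would handle separately by brackets attached to the outer boundary faces of $[0,1]^d$, contributing only the additive term $\tfrac{1}{2}(\delta^{-1}+1)$. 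Making both arguments quantitative---so that the leading constant is exactly $2^{d-2}(d^d/d!)$ and the boundary correction is $\tfrac{1}{2}(\delta^{-1}+1)$ with no cross-terms slipping back into the leading term---is the delicate part, and where the positive-coefficient form of the Faulhaber inequality is indispensable.
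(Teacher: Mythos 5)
This statement is cited from [PW20] rather than proved in the paper, but your proposed argument has a flaw at the very first step that makes the construction invalid, so the rest cannot be rescued as written.

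The brackets $[(k-e_{j^*(k)})/m,\,k/m]$ you propose are degenerate: for every coordinate $i\neq j^*(k)$ the lower and upper corners agree, so this ``bracket'' is a one-dimensional line segment of Lebesgue measure zero. A finite union of such segments cannot contain $[0,1]^d$, so the asserted covering property --- that every $y$ with ceiling vector $k(y)$ lies in the bracket attached to $k(y)$ --- is simply false, since it would force $y_i = k_i(y)/m$ for all $i\neq j^*$. The natural fix of enlarging the brackets to full lattice cells $[(k-\mathbf{1})/m,\,k/m]$ restores the covering property but destroys admissibility: near the corner $(m,\ldots,m)$ the difference $\lambda_d([0,k/m])-\lambda_d([0,(k-\mathbf{1})/m])$ is of order $d/m$, which exceeds $\delta\approx 1/m$ as soon as $d\ge 2$. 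This tension is exactly why a uniform lattice cannot give a $\delta$-bracketing cover; the literature (and the paper's Section~2, following Gnewuch's Theorem~1.15) instead covers the cube in diagonal layers, using brackets whose shape changes with the distance from the corner $(1,\ldots,1)$, and proves the bound by induction over the dimension, applying a Faulhaber-type estimate to the resulting sum $\sum_{i}(\delta^{-1}-i+1)^{d-k}$.

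Beyond this, the step that would actually prove the result --- extracting the leading constant $2^{d-2}$ together with the isolated additive term $\tfrac12(\delta^{-1}+1)$ --- is left entirely heuristic: you write that ``roughly half'' of the brackets are redundant and that making this quantitative ``is the delicate part,'' which concedes the gap. Finally, be aware that the generalized Faulhaber inequality (Theorem~\ref{prop:Faulhaber}) is a new contribution of this paper and is used to prove the stronger bound in Theorem~\ref{thm:general_bracketing_number}; it postdates [PW20], so it cannot have been the engine of Proposition~2.3 there, and invoking it does not by itself close either of the issues above.
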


In this paper, we establish a smaller upper bound for the bracketing number in dimension $d=2$ by an explicit geometric construction (Theorem~\ref{thm:brack2}). By comparing our proof to the simple argument in dimension $d=1$, it can be seen that direct geometric arguments to approximate bracketing numbers get more and more complicated as the dimension increases. Still we are able to significantly improve the general upper bound from Theorem~\ref{thm:brackgen1} and Theorem~\ref{thm:brackgen2}, respectively, for all $d\in \N$ to
	\begin{equation*} 
	N_{[\ ]}(d, \delta) \le \max\left({1.1}^{d-101},1\right) \frac{d^d}{d!} (\delta^{-1} + 1)^d,
	\end{equation*}
see Theorem~\ref{thm:general_bracketing_number}.
Note that the involved power of $2^k$ with $k=d-1$ and $k=d-2$, respectively, is reduced to $\max\left( 1, {1.1}^{d-101} \right)$. The proofs of Theorem~\ref{thm:brackgen1} and Theorem~\ref{thm:brackgen2} utilized simple integral criteria for an estimation of the bracketing number. Our improvements significantly rely on the use of the generalized Faulhaber inequality, Theorem~\ref{prop:Faulhaber}.

\paragraph{Discrepancy.} One advantage of $\delta$-covers is that they can be used to approximate the star-discrepancy of a sequence or set. 
Let $P$ be an $N$-point set in $[0,1)^d$. 
Then the \textbf{star-discrepancy} of $P$ is defined by
$$D^*_N(P) := \sup_{B \subset [0,1)^d} \left| \frac{1}{N}|P\cap B| 
- \lambda_d(B) \right|,$$
where the supremum is taken over all intervals $B = [0,a) \subset [0,1)^d$ and where 
$|A|$ denotes the number of elements in a finite set $A$.
For more details we refer the reader to \cite{DP10}, 
\cite{Nie92}. As a link to bracketing numbers we have the following lemma which is easy to prove. 

\begin{lemma}[\cite{DGS05}, Lemma~3.1]\label{Lemma:3.1} 
Let $P \subset [0,1)^d$ be an $N$-point set, $\delta > 0$, and $\Gamma$ be a $\delta$-cover of $[0,1]^d$. Then
	$$D_N^*(P) \leq \max_{x \in \Gamma} D_N(P,[0,x)) + \delta,$$
	where 
	$$D_N(P,[0,x)) := \left| \frac{1}{N} |P \cap [0,x)| - \lambda_d([0,x)) \right|.$$
\end{lemma}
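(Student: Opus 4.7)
The plan is to reduce the supremum over arbitrary anchored boxes $[0,y)$ with $y \in [0,1]^d$ to the finite maximum over $\Gamma$, using the sandwiching property built into the definition of a $\delta$-cover.

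First I would fix an arbitrary $y \in [0,1]^d$ and apply the $\delta$-cover property: there exist $x,z \in \Gamma \cup \{0\}$ with $x \le y \le z$ and $\lambda_d([0,z]) - \lambda_d([0,x]) \le \delta$. The key monotonicity I will exploit is twofold. On the one hand, the componentwise order $x \le y \le z$ gives the set inclusions $[0,x) \subseteq [0,y) \subseteq [0,z)$, hence the counting inequalities $|P \cap [0,x)| \le |P \cap [0,y)| \le |P \cap [0,z)|$. On the other hand, $\lambda_d([0,x]) \le \lambda_d([0,y)) \le \lambda_d([0,z])$, since boundaries of axis-aligned boxes have Lebesgue measure zero.

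Then I would bound the signed discrepancy at $y$ in each direction. For the upper direction, write
\[
\tfrac{1}{N}|P \cap [0,y)| - \lambda_d([0,y)) \;\le\; \tfrac{1}{N}|P \cap [0,z)| - \lambda_d([0,z]) + \bigl(\lambda_d([0,z]) - \lambda_d([0,x])\bigr) \le D_N(P,[0,z)) + \delta,
\]
and symmetrically
\[
\lambda_d([0,y)) - \tfrac{1}{N}|P \cap [0,y)| \;\le\; \lambda_d([0,x]) - \tfrac{1}{N}|P \cap [0,x)| + \delta \le D_N(P,[0,x)) + \delta.
\]
Combining both inequalities yields $D_N(P,[0,y)) \le \max(D_N(P,[0,x)), D_N(P,[0,z))) + \delta$.

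Finally I would take the supremum over $y \in [0,1]^d$ to obtain $D_N^*(P)$ on the left side. The only bookkeeping issue, and the thing I would be careful about, is the case where $x$ or $z$ equals the origin rather than an element of $\Gamma$: here $D_N(P,[0,0)) = 0$, so this term is dominated by $\max_{x \in \Gamma} D_N(P,[0,x))$ whenever $\Gamma$ is non-empty (which it is since $\delta \le 1$ forces at least one element to handle $y=(1,\dots,1)$). No step is truly hard; the only real care is matching closed versus half-open intervals in the $\delta$-cover definition versus the discrepancy definition, which is resolved by the fact that the relevant Lebesgue measures agree.
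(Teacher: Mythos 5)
Your proof is correct and is the standard sandwiching argument: fix $y$, bracket it between $x$ and $z$ from $\Gamma\cup\{0\}$, split into the two one-sided estimates, and take the supremum over $y$. The paper itself does not reproduce a proof (it cites \cite{DGS05}), but your argument is exactly the one given there, and your two cautionary remarks --- that $[0,\cdot)$ and $[0,\cdot]$ have equal Lebesgue measure, and that $D_N(P,[0,0))=0$ so the possible choice $x=0$ or $z=0$ is harmless --- are the only technical points one needs to handle.
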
 

Lemma~\ref{Lemma:3.1} can be useful for several applications: It can be applied to prove (theoretical) upper discrepancy bounds (see., e.g., \cite{Ais11, Gne12,NP18}  and Section~\ref{SUBSEC:Star_Discrepancy}), to construct algorithms that approximate the discrepancy of given input sets $P$ up to a user-specified error tolerance (see., e.g., \cite{Thi01, DGW14}) and algorithms that generate point sets that exhibit a small star-discrepancy (see, e.g., \cite{DGW09, DGW10}). Similar results based on $\delta$-covers of $[0,1]^d$ are known for other quality criteria for $N$-point sets, as, e.g., the weighted star-discrepancy (cf. \cite{Ais14, HPS08, WGH19} and Section~\ref{SUBSEC:Weighted_Star_Discrepancy}), the discrepancy with respect to all axis-parallel boxes, sometimes also called \textbf{extreme discrepancy} (see, e.g., \cite{Gne08}), or the \textbf{dispersion} (see, e.g., \cite{Rud18, HKKR20}), a quality measure that has important applications in optimization theory (cf., e.g., \cite{BGKTV17, Nie92}).
\\[12pt]
In all these applications it is essential to have good bounds for the bracketing number. 
Moreover, for constructing algorithms explicit  bracketing or $\delta$-covers of small size are essential; the smaller the $\delta$-covers, the faster the running time of the algorithms (or, alternatively, the larger the size of the instances that can be treated). That is why improved cardinality bounds and constructions of $\delta$-covers are highly relevant.
 \\[12pt]
The star-discrepancy is closely related to the construction of optimal integration rules via the Koksma-Hlawka inequality, namely: For every $N$-point set $P \subset [0,1)^d$ we have
$$\left| \int_{[0,1]^d} f(x) \mathrm{d}\lambda_d(x) - \frac{1}{N} \sum_{p \in P} f(p) \right| \leq D_N^*(P) V(f),$$
where $V(f)$ is the variation of $f$ in the sense of Hardy and Krause. If all partial mixed derivatives of $f$ are continuous on $[0,1]^d$ then  
$$V(f) = \sum_{u} \int_{[0,1]^u} \left| \tfrac{\partial^{|u|}f}{\partial x_u}(x_u,1) \right| \mathrm{d}x_u,$$ 
where the sum is taken over all subsets $u \subset \left\{ 1,2,\ldots,d \right\}$ and $(x_u,1)$ is the $d$-dimensional vector whose $i$-th component is $x_i$ if $i \in u$ and $1$ otherwise, see \cite{KN74}, Chapter~2. It follows from the Koksma-Hlawka inequality that integration rules with small star-discrepancy yield small integration errors. The task of high-dimensional integration occurs in different applications, e.g. in computational finance (see e.g. \cite{CMO97}, \cite{GW09}, \cite{Gla03}, \cite{Pas94},
\cite{WN19}). Therefore, it is of interest to know sharp bounds for the smallest achievable star-discrepancy
$$D^*(N,d) := \inf \left\{ D_N^*(P) | P \subset [0,1)^d, |P| = N \right\}$$
or, equivalently, for the \textbf{inverse of the star-discrepancy}
$$N^*(\varepsilon,d):= \inf \left\{ N \in \mathbb{N} | D^*(N,d) \leq \epsilon \right\}$$
which is the minimum number of sample points that guarantees a discrepancy bound of at most $\varepsilon$. If an $N$-point set $P \subset [0,1)^d$ exhibits a star-discrepancy of order
\begin{equation}\label{eq:asymp_bound}
D_N(P) = O(N^{-1}(\log N)^{d-1}),
\end{equation}
then it is called a \textbf{low-discrepancy point set}. It is conjectured that this is the best possible rate of convergence. In fact, this is trivially true in dimension one and it is known to be true for dimensions two by the work of Schmidt, \cite{Sch72}. 
For arbitrary dimension $d$ lower bounds show that \eqref{eq:asymp_bound}
is sharp up to logarithmic factors, see \cite{BL08}, \cite{BLV08}.
Due to the exponential dependence of the asymptotic bound \eqref{eq:asymp_bound}
on $d$ it does not  provide helpful information about the star-discrepancy of sample sets of moderate size, cf., e.g., \cite{Gne12}. Therefore, researchers started to prove pre-asymptotic bounds for the star-discrepancy.
The best known upper and lower bounds for the smallest achievable star-discrepancy with explicitly given dependence on the number of points as well as on the dimension are of the form
$$D_N^*(P) \leq C \sqrt{\frac{d}{N}}$$
for some constant $C$. This implies
\begin{equation}\label{est:inv_star_disc}
N^*(\varepsilon,d) \leq \lceil C^2 d \varepsilon^{-2} \rceil
\end{equation}
for all $d, N \in \mathbb{N}$ and $\varepsilon \in (0,1]$. The theoretical existence of such a $C$ was shown in \cite{HNWW01} but no concrete value for $C$ was calculated therein. In \cite{Ais11}, a new proof of the result was given including the first explicit upper bound $C=10$ (or more precisely $C=9.65$). In \cite{PW20}, the value was improved to $C=9$,  in a previous version of the recent preprint \cite{Doe16} to $C=2.7868$, and in \cite{GH16} even to $C=2.5287$. In the course of this paper, we will show that indeed $C \leq 2.4968$ holds (Corollary~\ref{cor:improvedC}). Even small improvements of the constant $C$ are not only of pure theoretical interest but lead to a smaller necessary number of points $N$ to guarantee a given level of precision for integration (observe that  due to \eqref{est:inv_star_disc} $N$ scales quadratically in $C$). Moreover the constant can be used to check whether a given (random or deterministic) point set or sequence is good in the sense that its empirically observed star-discrepancy is close to or even smaller than $C\sqrt{\frac{d}{N}}$. For instance, the authors of \cite{GGP20} conclude that the sequence generated by a secure bit generator is good up to dimension at least $d=15$, because its empirically observed discrepancy is smaller than $1 \cdot \sqrt{\frac{d}{N}}$ also for relatively big $N$. 

It was proved in \cite{Hin04} that there exist $c,\varepsilon_0>0$ such that 
$$N^*(\varepsilon,d) \geq cd\varepsilon^{-1}$$
for all $0 < \varepsilon \leq \varepsilon_0, d \in \mathbb{N}$. Note that there is a gap between the lower and the upper bound on $N^*(\varepsilon,d)$ with regard to the dependence on $\varepsilon^{-1}$. In \cite{Doe13}, it was shown that this gap does not exist for \textit{typical} Monte Carlo points. More precisely, there exists a constant $K>0$ such that the expected star-discrepancy of a Monte Carlo $N$-point sample fulfills
\begin{equation}\label{doe1}
\mathds{E}[D_N^*(X)] \geq K \sqrt{\frac{d}{N}}
\end{equation}
and additionally there is the probabilistic error bound
\begin{equation}\label{doe2}
\mathds{P} \left(D_N^*(X) <  {K}\sqrt{\frac{d}{N}} \right) \leq \exp(-\Theta(d)).
\end{equation}
An analogous result for Latin hypercube sampling instead of Monte Carlo sampling holds for dimension $d \geq 2$, see \cite{DDG18}. In the course of this paper we complement estimate \eqref{doe1} by showing that Monte Carlo $N$-point samples satisfy 
\begin{equation} \label{eq:intro:1}
\mathds{E}[D_N^*(X)] \le 2.55648 \sqrt{\frac{d}{N}},
\end{equation}
see Corollary~\ref{cor:expectation}. This is an immediate consequence of Proposition~\ref{Prop:Expec_Disc}, which shows how to bound the expected discrepancy of arbitrary random points with the help of probabilistic discrepancy bounds. Estimate \eqref{eq:intro:1} may serve as an indicator for a given random point set whether its discrepancy is better or worse than average. We close this paper by studying the so-called \textbf{weighted star-discrepancy} and derive probabilistic upper bounds for it, see Proposition~\ref{Prop:Pre_Aistleitner} as well as Corollaries~\ref{Ais_weighted_Discrepancy} and \ref{Cor:WGH19+}.
The general upper bound in Proposition~\ref{Prop:Pre_Aistleitner} gives immediately the upper 
bound in Corollary~\ref{Ais_weighted_Discrepancy}, which results in an improvement of a result of Aistleitner, see \cite[Theorem~1]{Ais14}.
Furthermore, Corollary~\ref{Cor:WGH19+} improves a bound from \cite{WGH19}.

\section{Improved Bracketing Number}

In this section, we derive new upper bounds for bracketing numbers. In dimension $d=2$, we do this by an explicit geometric construction. Roughly speaking, we get an additional factor $\log(2)$ implying an improvement of approximately $30 \%$ of the highest order constant in comparison to \cite{PW20}, Proposition 2.3. For arbitrary higher dimension we are able to reduce the involved power of $2^k$ from $2^{d-1}$ in Theorem~\ref{thm:brackgen1} and $2^{d-2}$ in Theorem~\ref{thm:brackgen2}, respectively, to $\max\left( 1, 1.1^{d-101} \right)$ in Theorem~\ref{thm:general_bracketing_number}.

\subsection{Constructive Bound in Dimension $d = 2$}\label{SubSec:2.1}

While it is straightforward to calculate the bracketing number for dimension $d=1$, namely $N_{[\ ]}(1,\delta) = \lceil \delta^{-1} \rceil$, the task is already much harder in dimension $2$. The general construction of the bracketing cover for our improved bound of $N_{[ \ ]}(2,\delta)$ follows the lines of \cite{EJoC08}. Here we provide a more careful analysis as in \cite{EJoC08}
to derive a bound on $N_{[ \ ]}(2,\delta)$ with explicitly given constants. The construction goes as follows: 

In a first step, we cover the diagonal of $[0,1]^2$ with $\delta$-brackets by choosing the points $\underline{a}_q:=(a_q,a_q)$, $0 \le q \le n-1$, as points of our $\delta$-cover, where $a_q:= \sqrt{1-q\delta}$ and $n:= \lceil \delta^{-1} \rceil$. Furthermore, we put $\underline{a}_n:= (0,0)$. Then 
$$
\lambda_2 ([0, \underline{a}_{q-1}]) - \lambda_2 ([0, \underline{a}_{q}]) = \delta
\hspace{3ex}\text{for all $1\le q\le n$.}
$$
We observe that 
\begin{align} \label{ineq:brack_decomposition}
	N_{[ \ ]}(2,\delta) \leq \sum_{q=1}^{n} N_{[ \ ]}([0,\underline{a}_{q-1}] \setminus [0,\underline{a}_{q}],\delta).
\end{align}

In a second step, we cover each of the sets $[0,\underline{a}_{q-1}] \setminus [0,\underline{a}_{q}]$, $1\le q \le n-1$, with $\delta$-brackets and add the upper right corner points of the brackets to our $\delta$-cover $\Gamma$. The construction of our $\delta$-cover leads to the upper bound on 
$N_{[ \ ]}([0,\underline{a}_{q-1}] \setminus [0,\underline{a}_{q}],\delta)$ presented in 
Lemma~\ref{lem:elementary_brackets} below; the construction itself is described in detail in the proof of Lemma~\ref{lem:elementary_brackets}.

\begin{center}
	\begin{tikzpicture}[scale=0.8]
		\draw[thick] (0,0)--(10,0);
		\draw[thick] (10,0)--(10,10);
		\draw[thick] (10,10)--(0,10);
		\draw[thick] (0,10)--(0,0);
		
		\draw[thick] (0,8.5)--(10,8.5);		
		
		\draw[fill=black] (10,10) circle (.5ex) node[above] {$k_1$};
		\draw[fill=black] (8.5,10) circle (.5ex) node[above] {$k_2$};
		\draw[fill=black] (6.9,10) circle (.5ex) node[above] {$k_3$};
		\draw[fill=black] (5.1,10) circle (.5ex) node[above] {$k_4$};
		\draw[fill=black] (2.8,10) circle (.5ex) node[above] {$k_5$};
		\draw[fill=black] (0.4,10) circle (.5ex) node[above] {$k_6=k_f$};
		
		\draw[fill=black] (8.5,8.5) circle (.5ex) node[below] {$l_1$};
		\draw[fill=black] (6.9,8.5) circle (.5ex) node[below] {$l_2$};
		\draw[fill=black] (5.1,8.5) circle (.5ex) node[below] {$l_3$};
		\draw[fill=black] (2.8,8.5) circle (.5ex) node[below] {$l_4$};
		\draw[fill=black] (0.4,8.5) circle (.5ex) node[below] {$l_5$};		
		
		\draw (10,0.2)--(10,-0.2)  node[below] {$b_0=1$};
		\draw (8.5,0.2)--(8.5,-0.2) node[below] {$b_1$};
		\draw (6.9,0.2)--(6.9,-0.2) node[below] {$b_2$};
		\draw (5.1,0.2)--(5.1,-0.2) node[below] {$b_3$};
		\draw (2.8,0.2)--(2.8,-0.2) node[below] {$b_4$};
		\draw (0.4,0.2)--(0.4,-0.2) node[below] {$b_5$};
		\draw[fill=black] (0,10) circle (.0ex) node[left] {$1 = a_0$};
		\draw[fill=black] (0,8.5) circle (.0ex) node[left] {$b_1 = a_1$};
		\draw[fill=black] (0,0) circle (.5ex) node[left] {$0= l_6 = l_f$};
			
		\draw[dashed] (10,10)--(8.5,8.5);
		\draw[dashed] (8.5,10)--(6.9,8.5);
		\draw[dashed] (6.9,10)--(5.1,8.5);
		\draw[dashed] (5.1,10)--(2.8,8.5);
		\draw[dashed] (2.8,10)--(0.4,8.5);
		
		\draw[dashed] (8.5,10)--(8.5,8.5);
		\draw[dashed] (6.9,10)--(6.9,8.5);
		\draw[dashed] (5.1,10)--(5.1,8.5);
		\draw[dashed] (2.8,10)--(2.8,8.5);
		\draw[dashed] (0.4,10)--(0.4,8.5);
	\end{tikzpicture} 
	
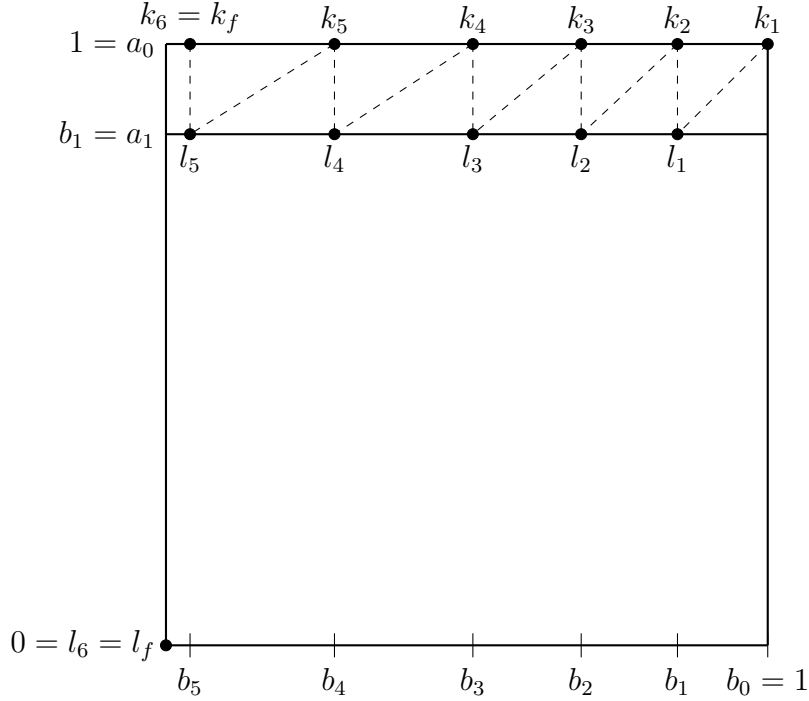
\captionof{figure}{ Unit square with marked points} \label{fig1}
\end{center}
\begin{lemma} \label{lem:elementary_brackets} 
Let $0 < \delta < 1$ be arbitrary and choose $q \leq n$. For $\delta_q:=\tfrac{\delta}{1-(q-1)\delta}$ we have
	$$N_{[ \ ]}([0,\underline{a}_{q-1}] \setminus [0,\underline{a}_{q}],\delta) \leq 2 \left\lceil \frac{ -2 \cdot \log(2)}{\log\left(1 - \delta_q \right)} \right\rceil + 1 =: 2f(\delta_q) - 1. $$
For $q=n$ we even have 	$N_{[ \ ]}([0,\underline{a}_{n-1}] \setminus \{0\}, \delta) =1$.
\end{lemma}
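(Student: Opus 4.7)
The plan is to realise the construction suggested by Figure~\ref{fig1}. Writing $A := a_{q-1}$ and $a := a_q$, the region of interest is the L-shape $L := [0,A]^2 \setminus [0,a]^2$, and by construction $A^2 - a^2 = \delta$. Exploiting the diagonal symmetry of $L$, it is enough to bracket the corner square $[a,A]^2$ together with the top strip $[0,a]\times[a,A]$; the right strip $[a,A]\times[0,a]$ is then covered by the mirrored brackets, with the corner-square bracket shared. So if the corner-plus-top-strip construction uses $f$ brackets, the total count will be $2f-1$; it will then remain to show $f \leq f(\delta_q)$. The degenerate case $q = n$ will be handled separately at the end.

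For the corner-plus-top-strip I place $k_i := (b_{i-1},A)$ on the top edge and $l_i := (b_i,a)$ on the lower edge of the strip, and take the brackets $[l_i,k_i]$. The abscissas $b_i$ are chosen greedily so that each bracket saturates the anchored-volume condition $A b_{i-1} - a b_i = \delta$, starting from $b_0 := A$. This yields $b_i = (A b_{i-1}-\delta)/a$; in particular $b_1 = a$, so $[l_1,k_1] = [a,A]^2$ is exactly the corner square. Normalising by $A$ (setting $c_i := b_i/A$ and using $\delta/A^2 = \delta_q$) reduces the recursion to
\[
c_i = \frac{c_{i-1}-\delta_q}{\sqrt{1-\delta_q}}, \qquad c_0 = 1.
\]

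The main technical step is to estimate how quickly the $c_i$ drop below zero. Writing $\alpha := \sqrt{1-\delta_q}$, this recursion is affine with fixed point $\delta_q/(1-\alpha) = 1+\alpha$, which gives the closed form $c_i = (1+\alpha) - \alpha^{1-i}$. Letting $f$ be the smallest positive integer with $c_f \leq 0$ and solving the corresponding inequality yields $f = \lceil 1 + \log(1+\alpha)/\log(1/\alpha)\rceil$. Since $\alpha < 1$, one has $\log(1+\alpha) < \log 2$, so by monotonicity of the ceiling
\[
f \;\leq\; 1 + \left\lceil \frac{\log 2}{\log(1/\alpha)} \right\rceil \;=\; 1 + \left\lceil \frac{-2\log 2}{\log(1-\delta_q)} \right\rceil \;=\; f(\delta_q).
\]
To finish the top-strip coverage I append the leftmost bracket $[(0,0),(b_{f-1},A)]$; its anchored-volume condition $A b_{f-1} \leq \delta$ is exactly equivalent to $c_f \leq 0$ (via the recursion), hence is satisfied by the choice of $f$.

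The separately stated case $q = n$ is handled by a one-liner: the region shrinks to $[0,a_{n-1}]^2 \setminus \{0\}$, and $n = \lceil\delta^{-1}\rceil \geq \delta^{-1}$ forces $a_{n-1}^2 = 1-(n-1)\delta \leq \delta$, so the single bracket $[(0,0),(a_{n-1},a_{n-1})]$ covers it and already satisfies the volume condition. The step I expect to be the fiddliest is the closed-form analysis of the affine recursion: one has to resist absorbing the ``$+1$'' coming out of $\lceil 1 + \cdot\rceil = 1 + \lceil \cdot\rceil$ into the ceiling too early, because that $+1$ is precisely what produces the ``$+1$'' hidden in the definition of $f(\delta_q)$.
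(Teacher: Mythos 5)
Your proof is correct and follows essentially the same construction as the paper's: greedy placement of vertical brackets along the top strip, closing off with a single bracket anchored at the origin, and doubling by diagonal reflection with the shared corner bracket giving $2f-1$. The only cosmetic differences are that you normalize by $A=a_{q-1}$ from the outset so that all $q$ are treated uniformly (whereas the paper does $q=1$ and then rescales), and you solve the affine recursion via its fixed point $c^\ast = 1+\alpha$ rather than summing a geometric series; the final estimate $\log(1+\alpha)\le \log 2$ is algebraically identical to the paper's use of $\sqrt{1-x}\le 1-x/2$.
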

\begin{proof} At first, we consider the case $q = 1$. Then we define $b_0:=1$ and let $b_i$ be $1$ minus the sum over the widths of the $i$ first $\delta$-brackets covering $[0,1]^2 \setminus ([0,1] \times [0,a_1])$. Furthermore, let $k_i:=(b_{i-1},1)$ and $l_i:=(b_i,b_1)$ be the points as indicated in Figure~\ref{fig1}. We choose the $b_i$ such that the condition
\begin{align} \label{eq1}
	\lambda_2([0,k_i]) -\lambda_2 ([0,l_i]) = \delta
\end{align}
is satisfied. Note that $b_1 = a_1$. If $b_i \leq \delta$, then $[l_j, k_j]$, $j=1,\ldots,i$, and $[0,k_{i+1}]$ cover $[0,1]^2\setminus ([0,1]\times [0,a_1])$, hence
$N_{[ \ ]}([0,1]^2\setminus([0,1]\times [0,a_1]),\delta) \le f = f(\delta) := i+1$. 
Otherwise, $b_{i+1}$ is defined according to \eqref{eq1} and hence given by the equation
\begin{align*} 
	\delta = b_i \cdot 1- b_{i+1} \cdot b_1
\end{align*}
which is equivalent to
\begin{align} \label{eq3}
b_{i+1} = \frac{b_i-\delta}{b_1}.
\end{align}
From equation~\eqref{eq3} it follows inductively that
\begin{align} \label{eq4}
	b_{i+1} = b_1^{-i+1} - \delta \sum_{j=1}^i b_1^{-j} = b_1^{-i+1} - \delta \cdot b_1^{-1} \cdot \frac{b_1^{-i}-1}{b_1^{-1}-1}.
\end{align}
Now the aim is to calculate the minimal $i$ such that $b_i \le \delta$ -- recall that this $i$ satisfies $f=f(\delta) = i+1$.
From \eqref{eq4} it follows that the latter condition is equivalent to
 $$b_1 \cdot \frac{1-b_1}{1-b_1^i} \leq \delta,$$ 
which yields
$$i \geq \frac{\log(1-\delta^{-1}(b_1-b_1^2))}{\log(b_1)}.$$
Plugging in $b_1 = \sqrt{1-\delta}$ leads to
\begin{align*}
f  & = \left\lceil \frac{\log\left( 1 - \delta^{-1}(\sqrt{1-\delta}-(1-\delta)) \right)}{\log\left(\sqrt{1-\delta}\right)} \right\rceil + 1\\
& = \left\lceil 2 \frac{\log\left( 1 - \sqrt{1-\delta} \right) - \log(\delta)}{\log(1-\delta)} \right\rceil + 1.
\end{align*}
Applying the estimate $\sqrt{1-x} \leq 1 - \tfrac{x}{2}$ for $0 \leq x \leq 1$, we obtain
\begin{align*}
f \leq \left\lceil \frac{-2\log(2)}{\log(1-\delta)} \right\rceil + 1
\end{align*}
Clearly, we can cover $[0,1]^2 \setminus ([0, a_1] \times [0,1])$ with $f$ different $\delta$-brackets by simply applying the reflection $\Psi(x_1,x_2) = (x_2,x_1)$ to the $\delta$-brackets we used to cover  $[0,1]^2 \setminus ([0, 1] \times [0, a_1])$. Consequently, we can cover $[0,1]^2 \setminus [0, \underline{a}_1]$ with $2f-1$ different $\delta$-brackets. This implies the claim for $q=1$. For $q > 1$, note that the transformation $\Phi(x_1,x_2) = a_{q-1}^{-1}\cdot (x_1,x_2)$ maps $[0,\underline{a}_{q-1}] \setminus [0,\underline{a}_{q}]$ onto $[0,1]^2 \setminus ([0,\sqrt{1-\delta_q}] \times [0,\sqrt{1-\delta_q})]$ and the general result thus follows from the case $q=1$.
\end{proof}
Now we use \eqref{ineq:brack_decomposition} and the preparatory Lemma~\ref{lem:elementary_brackets} 
to improve the bound for the bracketing number in dimension $d=2$.

\begin{theorem} \label{thm:brack2} Let $0 < \delta < 1$ be arbitrary. For the bracketing number of $[0,1]^2$ we have
	\begin{align}\label{bound_d=2}
	N_{[\ ]}(2, \delta) 
	\leq 2 \log(2) \delta^{-2} + 3(\log(2) + 1) \delta^{-1} - \left( \frac{13}{9} \log(2) -1 \right).
	\end{align}
\end{theorem}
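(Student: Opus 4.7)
The plan is to combine \eqref{ineq:brack_decomposition} with Lemma~\ref{lem:elementary_brackets} to reduce matters to estimating a single sum,
\begin{equation*}
N_{[\ ]}(2,\delta) \;\le\; 1 + \sum_{q=1}^{n-1}\bigl(2f(\delta_q) - 1\bigr),
\end{equation*}
and then to bound each summand by a sharp continuous estimate before collecting terms. The ceiling inside $f(\delta_q)$ is first removed via $\lceil x\rceil \le x + 1$, producing $2f(\delta_q) - 1 \le -4\log(2)/\log(1-\delta_q) + 3$.

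The crucial analytic input is a tight bound on $-1/\log(1-x)$. The familiar estimate $-\log(1-x) \ge x$ would yield a $\delta^{-1}$-coefficient that is off by roughly $\log(2)$ and so will not reach the claimed constants. Instead I would use
\begin{equation*}
-\log(1-x) \;\ge\; \frac{2x}{2-x} \qquad (0 \le x < 1),
\end{equation*}
which one verifies by noting that $\phi(x) := -\log(1-x) - 2x/(2-x)$ satisfies $\phi(0) = 0$ and $\phi'(x) = x^2/[(1-x)(2-x)^2] \ge 0$. This yields $-2\log(2)/\log(1-\delta_q) \le 2\log(2)/\delta_q - \log(2)$, so that plugging in $\delta_q = \delta/(1 - (q-1)\delta)$ gives a per-$q$ estimate that is linear in $q$.

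Summing over $q = 1,\ldots,n-1$ by means of $\sum_{q=1}^{n-1}(1-(q-1)\delta) = (n-1) - \delta(n-1)(n-2)/2$, and writing $n = \delta^{-1} + r$ with $r \in [0,1)$, the algebraic identity $(n-1) - \delta(n-1)(n-2)/2 = (\delta^{-1}+1)/2 - \delta(r-1)(r-2)/2$ collapses everything into a polynomial in $\delta^{-1}$ and $r$. Subtracting this polynomial from the claimed right-hand side and using the lower bound $\delta^{-1} \ge 2 - r$ (which follows from $n \ge 2$ for $\delta \in (0,1)$) reduces the required inequality to the nonnegativity on $[0,1]$ of the quadratic $2\log(2)\,r^2 - (7\log(2) + 3)\,r + 59\log(2)/9 + 3$. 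Its derivative $4\log(2)r - (7\log(2) + 3)$ is negative throughout $[0,1]$, so the minimum is attained at $r = 1$ with value $14\log(2)/9 > 0$, completing the proof. The main nontrivial step is locating the right sharpening of $-\log(1-x) \ge x$; the rest is bookkeeping.
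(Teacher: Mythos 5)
Your proof is correct, and it is a genuine variant of the paper's argument rather than a verbatim reproduction. Both proofs start from the same decomposition \eqref{ineq:brack_decomposition} and Lemma~\ref{lem:elementary_brackets}, and both drop the ceiling with $\lceil x\rceil \le x+1$. The divergence is in the analytic estimate: the paper uses $-\log(1-x) \ge x + \tfrac{x^2}{2} + \tfrac{x^3}{3}$ and then replaces $\delta_q$ by $\delta$ in the correction factor (legal since $\delta_q \ge \delta$), which leaves a stray $\bigl(1 + \tfrac{\delta}{2} + \tfrac{\delta^2}{3}\bigr)^{-1}$ that has to be unwound by an algebraic rearrangement of $\delta^{-2}+2\delta^{-1}$. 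You instead use $-\log(1-x) \ge \tfrac{2x}{2-x}$, whose reciprocal form $-1/\log(1-x) \le \delta_q^{-1} - \tfrac{1}{2}$ is affine in $\delta_q^{-1}$ — and $\delta_q^{-1} = \delta^{-1} - q + 1$ is affine in $q$ — so the sum over $q$ collapses exactly into a quadratic in $n$ with no residual factor. Parametrizing $n = \delta^{-1} + r$, $r \in [0,1)$, and invoking $\delta^{-1}\ge 2-r$ then reduces the claim to the sign of a concrete quadratic in $r$, minimized at $r=1$ where it equals $\tfrac{14}{9}\log 2 > 0$; I verified this numerically and algebraically. Your estimate $-\log(1-x)\ge \tfrac{2x}{2-x}$ is slightly weaker than the paper's for small $x$ (its Taylor series is $x + \tfrac{x^2}{2} + \tfrac{x^3}{4}+\dots$), but its structural affineness trades the paper's correction-factor bookkeeping for a cleaner polynomial identity in $r$; both routes reach exactly the stated coefficients. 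The only point worth making more explicit in a write-up is that the $q=n$ term contributes $+1$ via the second clause of Lemma~\ref{lem:elementary_brackets}, which you have accounted for.
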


\begin{proof} Let $\kappa_i := 2f(\delta_i) - 1$, where $\delta_i$ is defined as in Lemma~\ref{lem:elementary_brackets}. Then \eqref{ineq:brack_decomposition} and Lemma~\ref{lem:elementary_brackets} imply
	$$N_{[\ ]}(2, \delta) \leq \sum_{i=1}^n \kappa_i \leq \sum_{i=1}^{n-1} 4 \log(2) \frac{1}{|\log\left(1 - \delta_i \right)|} + 3(n-1) + 1.$$
Recall that $|\log(1-x)| \geq x + \tfrac{x^2}{2} + \tfrac{x^3}{3}$ for $0\le x < 1$. Hence it follows that
\begin{align*}
	N_{[\ ]}(2, \delta) & \leq 4 \log(2) \sum_{i=1}^{n-1} \frac{1}{\delta_i \left( 1 + \frac{\delta_i}{2} + \frac{\delta_i^2}{3}\right)}+3(n-1) + 1\\
	& \leq  4\log(2) \sum_{i=1}^{n-1} \frac{\delta_i^{-1}}{1+\frac{\delta}{2}+\frac{\delta^2}{3}} +3(n-1)+1\\
	& = 4\log(2) \sum_{i=1}^{n-1} \frac{\delta^{-1}-i+1}{1+\frac{\delta}{2}+\frac{\delta^2}{3}} +3(n-1)+1.
\end{align*}
From $(n-1)(\delta^{-1}+1) - \tfrac{n(n-1)}{2} \leq \tfrac{1}{2}\delta^{-2} + \delta^{-1}$ we finally deduce
 \begin{align*}
 	N_{[\ ]}&(2, \delta) \leq 2\log(2) \left(1 + \tfrac{\delta}{2}+\tfrac{\delta^2}{3} \right)^{-1} (\delta^{-2}+2\delta^{-1}) + 3(n-1)+1\\
 	& = 2\log(2)  \left(1 + \tfrac{\delta}{2}+\tfrac{\delta^2}{3} \right)^{-1} \left( \delta^{-2}  \left(1 + \tfrac{\delta}{2}+\tfrac{\delta^2}{3} \right) +\frac{3}{2} \delta^{-1} \left(1 + \tfrac{\delta}{2} \right) - \tfrac{13}{12} \right) + 3(n-1) + 1\\
 	& \leq 2 \log(2) \delta^{-2} + 3(\log(2)+1) \delta^{-1} - \left(\frac{13}{9}\log(2)-1\right).
 \end{align*}
\end{proof}

\begin{remark}
It should be mentioned that in \cite{EJoC08} the following constructive upper bound for the bracketing number in dimension $d=2$ was derived:
\begin{align}\label{ejoc08}
	N_{[\ ]}(2, \delta) = \delta^{-2} + o(\delta^{-2}).
	\end{align}
This bound is asymptotically better than the one we presented in Theorem~\ref{thm:brack2}, and it is (essentially) optimal as was shown in \cite{Gne08} by the following lower bound
\begin{align*}
	N_{[\ ]}(2, \delta) = \delta^{-2} - O (\delta^{-1}).
	\end{align*}
The essential point in the constructive upper bound in Theorem~\ref{thm:brack2} is that we have control over the coefficients of all powers of $\delta^{-1}$; we even know the exact number of brackets of the underlying $\delta$-bracketing cover. This is not the case for the construction that leads to \eqref{ejoc08}. It seems to be too involved to provide a simple formula for the exact number of its $\delta$-brackets.
\end{remark}

\subsection{Upper Bound on the Bracketing Number for Arbitrary Dimension} \label{SubSec:2.2}

\paragraph{Faulhaber's formula.} An important ingredient for our improved upper bound on the bracketing number for arbitrary dimensions is a generalized version of the so-called Faulhaber formula, see \cite{Knu93}. Using the whole Faulhaber formula directly would be of limited use in our context because it also involves negative coefficients. Furthermore in our calculations for improving the bound on the bracketing numbers, there do not appear sums of the form $\sum_{i=1}^n i^j$ but sums involving a shift parameter $0 < r < 1$, i.e., $\sum_{i=1}^n (i+r)^j$. Therefore, we derive at first the following inequality.

\begin{theorem}[Generalized Faulhaber inequality] \label{prop:Faulhaber} 
Let $j,n \in \N$ and $0 \le r \le 1$ be arbitrary. Then the following inequality holds
\begin{equation}\label{generalized_faulhaber_inequality}
\sum_{i=1}^n (i+r)^j \leq \frac{(n+r)^{j+1}}{j+1} + \frac{(n+r)^j}{2} + \frac{j(n+r)^{j-1}}{12}.
\end{equation}
\end{theorem}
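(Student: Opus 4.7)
The plan is to prove the inequality by induction on $n$. Setting
$$S_n := \sum_{i=1}^n (i+r)^j \quad\text{and}\quad T_n := \frac{(n+r)^{j+1}}{j+1} + \frac{(n+r)^j}{2} + \frac{j(n+r)^{j-1}}{12},$$
I want to show $S_n \le T_n$. The base case $n = 0$ reads $0 \le T_0$ and is immediate because $r \ge 0$ and $j \ge 1$ make every summand of $T_0$ non-negative. Subtracting reduces the inductive step to the ``one-step'' inequality
$$(m+1)^j \le \frac{(m+1)^{j+1} - m^{j+1}}{j+1} + \frac{(m+1)^j - m^j}{2} + \frac{j\bigl((m+1)^{j-1} - m^{j-1}\bigr)}{12}$$
for every $m \ge 0$ and every $j \in \N$; applied with $m := n - 1 + r \ge 0$ this yields $T_n - T_{n-1} \ge (n+r)^j = S_n - S_{n-1}$ and hence closes the induction.

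To establish the one-step inequality I would expand both sides with the binomial theorem and compare the coefficient of $m^{j-k}$ for each $k \in \{0, 1, \ldots, j\}$. For $k = 0$ only the first term on the right contributes and gives $\frac{1}{j+1}\binom{j+1}{j} = 1 = \binom{j}{j}$, matching the left. For $k = 1$ the first two terms contribute $\frac{j}{2} + \frac{j}{2} = j = \binom{j}{j-1}$, again matching exactly. For $k \ge 2$ all three terms on the right contribute, and after factoring out $\frac{j!}{(j-k)!(k+1)!}$ the difference (RHS) $-$ (LHS) of the $m^{j-k}$ coefficients simplifies to
$$\frac{j!}{12\,(j-k)!\,(k+1)!}\,\bigl(k^2 - 5k + 6\bigr) \;=\; \frac{j!}{12\,(j-k)!\,(k+1)!}(k-2)(k-3),$$
which vanishes for $k \in \{2, 3\}$ and is strictly positive for $k \ge 4$. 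Hence (RHS) $-$ (LHS) is a polynomial in $m$ with non-negative coefficients, and since $m \ge 0$ the one-step inequality follows.

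The main obstacle is recognizing the clean factorization $(k-2)(k-3)$, which is precisely what forces the first three Euler--Maclaurin-like coefficients to match and all subsequent ones to be non-negative. Conceptually, the weights $\frac{1}{j+1}, \frac{1}{2}, \frac{j}{12}$ correspond to the Bernoulli contributions of $B_0, B_1, B_2$; truncating the full Faulhaber / Euler--Maclaurin expansion at this point and noting that the next non-zero Bernoulli number $B_4 = -\tfrac{1}{30}$ is negative explains why this truncated expression is an upper bound rather than an identity.
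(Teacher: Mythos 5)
Your proof is correct and follows essentially the same route as the paper: induction on $n$, binomial expansion of the one-step increment, and coefficient-by-coefficient comparison, leading to the same non-negativity condition $(k-2)(k-3)\ge 0$ (the paper writes it as $\tfrac{1}{s+1}+\tfrac{1}{2}+\tfrac{s}{12}\ge 1$ for $s=k$, which is the same factorization in disguise). One pleasant streamlining in your version is taking $n=0$ as the base case, where $0\le T_0$ is immediate; the paper anchors the induction at $n=1$ and therefore needs a separate real-variable minimization of $f_r(j)=\tfrac{1+r}{j+1}+\tfrac{1}{2}+\tfrac{j}{12(1+r)}$ to verify $f_r(j)\ge 1$, an argument your setup makes unnecessary.
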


\begin{proof} We prove the assertion by induction on $n$. For $n=1$ we have $(1+r)^j$ on the left hand side of \eqref{generalized_faulhaber_inequality} and 
	$$(1+r)^j \underbrace{\left( \frac{1+r}{j+1}+\frac{1}{2} + \frac{j}{12(1+r)}\right)}_{=:f_r(j)}$$
on the right hand side of \eqref{generalized_faulhaber_inequality}.
Allowing arbitrary $x\in (-1,\infty)$, we see that $f_r(x)$ has a unique global minimum in 
$x_r := \sqrt{12}(1+r)-1$. This leads for fixed $r\in [0,1]$ to the minimal value
$$f_r(x_r) = \frac{2}{\sqrt{12}} + \frac{1}{2} - \frac{1}{12(1+r)}.$$
For $r \ge 0.07736$ we have $f_r(x_r) >1$. For $r\in [0, 0.07736)$ the minimum of $f_r(j)$, $j\in\N$, is taken in $j = \lfloor x_r \rfloor =2$ or in $j = \lceil x_r \rceil =3$. For both values of $j$ we have $f_r(j) \ge 1$, since $f_0(j) = 1$ and $\frac{\partial}{\partial r} f_r(j) \ge 0$.\\[12pt]
Now we consider $n+1$. Applying the induction hypothesis and binomial expansion to the left hand side of \eqref{generalized_faulhaber_inequality} yields
\begin{align*}
	\sum_{i=1}^{n+1} (i+r)^j & \le \frac{(n+r)^{j+1}}{j+1} + \frac{(n+r)^j}{2} + \frac{j(n+r)^{j-1}}{12} + (n+1+r)^j\\
	&= \frac{(n+r)^{j+1}}{j+1} + \frac{(n+r)^j}{2} + \frac{j(n+r)^{j-1}}{12} + \sum_{k=0}^j \binom{j}{k} (n+r)^k.
\end{align*} 
Similarly, the right hand side of \eqref{generalized_faulhaber_inequality} can be written as
\begin{align*}
	\frac{1}{j+1} \sum_{k=0}^{j+1} \binom{j+1}{k}(n+r)^k + \frac{1}{2} \sum_{k=0}^j \binom{j}{k} (n+r)^k + \frac{j}{12} \sum_{k=0}^{j-1} \binom{j-1}{k}(n+r)^k.
\end{align*}
Next we compare the coefficients of the powers of $n+r$ on both sides. For $k=j+1,j,j-1,j-2,j-3$ the coefficients are equal on both sides, namely  $\tfrac{1}{j+1}, \tfrac{3}{2}, \tfrac{13j}{12}, \tfrac{j(j-1)}{2}$ and $\tfrac{j(j-1)(j-2)}{6}$. Let $k=j-s \in \N_0$ with $j \geq s \geq 4$ be arbitrary. Then the coefficient on the left side is $\binom{j}{j-s}$. On the right side, the coefficient is
\begin{align*}
	\frac{1}{j+1} \binom{j+1}{j-s} + \frac{1}{2} \binom{j}{j-s} + \frac{j}{12} \binom{j-1}{j-s} = \left(\frac{1}{s+1} + \frac{1}{2} + \frac{s}{12} \right) \binom{j}{j-s}.  
\end{align*}
The expression in the bracket is at least $1$ for $s \geq 3$. This finishes the proof.
\end{proof}

\paragraph{Main result on bracketing numbers.} The generalized Faulhaber inequality from Theorem~\ref{prop:Faulhaber} is the main tool for improving the upper bound on the bracketing number for arbitrary dimension.

\begin{theorem} \label{thm:general_bracketing_number} 
For all $0 < \delta < 1$ and $d \in \mathbb{N}$ we have
	\begin{equation} \label{ineq:brack}
	N_{[\ ]}(d, \delta) \le \max\left({1.1}^{d-101},1\right) \frac{d^d}{d!} (\delta^{-1} + 1)^d.
	\end{equation}
\end{theorem}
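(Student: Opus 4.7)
My plan is to proceed by induction on the dimension $d$. The base case $d=1$ is immediate from $N_{[\ ]}(1,\delta)=\lceil \delta^{-1}\rceil\le \delta^{-1}+1$ together with $\max(1.1^{-100},1)\cdot 1^1/1!=1$. For the inductive step I would construct a $\delta$-bracketing cover of $[0,1]^d$ by slicing along one coordinate axis, mirroring the layered construction underlying Theorems~\ref{thm:brackgen1} and \ref{thm:brackgen2}: partition $[0,1]$ into $n=\lceil \delta^{-1}\rceil$ sub-intervals $[a_{i-1},a_i]$ and, on each slab $[a_{i-1},a_i]\times [0,1]^{d-1}$, extend a $\delta_i$-bracketing cover of $[0,1]^{d-1}$ with $\delta_i$ chosen just small enough that every resulting $d$-dimensional bracket still satisfies $\lambda_d([0,y])-\lambda_d([0,x])\le \delta$. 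This yields the workhorse recursion
$$N_{[\ ]}(d,\delta)\le \sum_{i=1}^n N_{[\ ]}(d-1,\delta_i).$$

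Applying the induction hypothesis reduces the right-hand side to a constant depending on $d$ and $\delta$ multiplied by a sum of the form $\sum_{i=1}^n (i+r)^{d-1}$, where the shift $r\in [0,1]$ is fixed by the partition. This is precisely the setting in which Theorem~\ref{prop:Faulhaber} improves upon the integral bounds used in the proofs of Theorems~\ref{thm:brackgen1} and \ref{thm:brackgen2}: where the earlier arguments retained only the leading term $\tfrac{(n+r)^d}{d}$ with a generous remainder, Theorem~\ref{prop:Faulhaber} delivers a three-term estimate whose coefficients are all \emph{positive}. That positivity is essential, because it allows me to factor the bound cleanly as $\tfrac{(n+r)^d}{d}\bigl(1+\tfrac{d}{2(n+r)}+\tfrac{d(d-1)}{12(n+r)^2}\bigr)$ without losing control over signs, and hence to convert the recursion into a closed-form dimension-dependent ratio $C_d\le g(d)\,C_{d-1}$.

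The main obstacle is the numerical bookkeeping needed to close the induction. Using the identity $\frac{d^d/d!}{(d-1)^{d-1}/(d-1)!}=\bigl(1+\tfrac{1}{d-1}\bigr)^{d-1}$, the statement reduces to checking
$$g(d)\le \frac{\max(1,1.1^{d-101})}{\max(1,1.1^{d-102})}\cdot \bigl(1+\tfrac{1}{d-1}\bigr)^{d-1}$$
for every $d\ge 2$. For large $d$ the factor $1.1$ gives ample slack and the asymptotic $g(d)\to e$ from below suffices; for small-to-moderate $d$ the target ratio is only $(1+\tfrac{1}{d-1})^{d-1}<e$, so the Faulhaber remainder must be suppressed by optimizing the partition points $a_i$ and the associated shift $r$. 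I expect the specific threshold $d=101$ to emerge as the smallest integer beyond which the pure bound $C_d\le \tfrac{d^d}{d!}(\delta^{-1}+1)^d$ can no longer be propagated without the extra factor $1.1$, so the proof should conclude with a finite verification of the induction hypothesis for $d$ up to roughly this threshold, followed by a monotonicity argument that propagates the bound to all larger $d$.
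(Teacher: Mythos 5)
Your high-level plan matches the paper's in three respects: induction on $d$, the generalized Faulhaber inequality (Theorem~\ref{prop:Faulhaber}) applied to the resulting power sums, and a finite check of small and moderate $d$ followed by an asymptotic argument to handle the threshold at $d=101$. However, the central recursion you propose is not the one the paper actually uses, and I do not believe your version closes. The paper builds on the Gne08 construction, which layers $[0,1]^d$ along level sets of the weight $\prod_i x_i$ (as in Subsection~\ref{SubSec:2.1}) and covers each shell's boundary faces of \emph{every} lower dimension simultaneously; after applying the induction hypothesis this gives
$$
N_{[\,]}(d,\delta)\le \sum_{k=1}^{d-1}\binom{d}{k}\, b_{d-k}\,\frac{d^{d-k}}{(d-k)!}\Bigl(\sum_{i=1}^{n-1}(\delta^{-1}-i+1)^{d-k}\Bigr)+\delta^{-1}+1,
$$
a sum over all dimensions $d-k$, $1\le k\le d-1$, not a one-step $d\to d-1$ recursion. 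Your axis-parallel slab decomposition $N_{[\,]}(d,\delta)\le\sum_i N_{[\,]}(d-1,\delta_i)$ is a genuinely different construction, and it runs into trouble: the slab-level admissible tolerance $\delta_i$ is governed by $a_i\lambda_{d-1}([0,y'])-a_{i-1}\lambda_{d-1}([0,x'])\le\delta$, which forces $\delta_i$ to shrink with the slab position and does not reproduce the $\tfrac{d^d}{d!}$ structure; in particular for large $\delta$ (small $n$) the ratio you would need, roughly $(1-\tfrac1d)^{d-1}(1+\tfrac1{n+r})^d\lesssim 1$, fails badly.

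Because of the multi-$k$ recursion, the induction in the paper does not reduce to a scalar ratio inequality $C_d\le g(d)C_{d-1}$ with the target
$\frac{\max(1,1.1^{d-101})}{\max(1,1.1^{d-102})}(1+\tfrac1{d-1})^{d-1}$.
Instead, Faulhaber is applied for each exponent $j=d-k$, both sides are expanded as polynomials in $\delta^{-1}$, and the coefficients of each power $p=0,\dots,d$ are compared term by term. This comparison is the actual crux, and it requires the two preparatory Lemmas~\ref{Lemma:Maximum_1} and \ref{cor:max} to locate and bound the maximum of the ratio $A_{d-k}$ over $k$, plus a direct verification for $d\le 101$ and a separate argument (splitting cases $d-k+1>101$ and $d-k+1\le 101$) when $d>101$. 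None of this coefficient-level machinery appears in your sketch. So while the Faulhaber ingredient, the induction skeleton, and the $d=101$ threshold are correctly anticipated, the decomposition and the closure of the induction are both mischaracterized, and the argument as written would not go through.
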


In order to keep the presentation of the proof of Theorem~\ref{thm:general_bracketing_number} as clear as possible, we present two preparatory results as separate statements. 

\begin{lemma}\label{Lemma:Maximum_1} 
Given $d \in \mathbb{N}$, the function
	$$f(k) = \frac{d!}{d^{k-1} (d-k+1)!} \left( \frac{k}{12} + \frac{1}{2} \right)$$
	with $k \in \mathbb{N}$ is maximal for $k = -3 + { \lceil \sqrt{16+d} \rceil}$.
\end{lemma}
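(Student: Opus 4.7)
The plan is to locate the integer maximizer of $f$ by studying the discrete ratio $f(k+1)/f(k)$ and determining for which integers $k$ it transitions from $\ge 1$ to $\le 1$. Since $f$ is the product of a falling-factorial expression, a negative power of $d$, and the linear factor $(k/12+1/2)$, the ratio will simplify cleanly.

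Concretely, I would first compute, using $\tfrac{k}{12}+\tfrac{1}{2} = \tfrac{k+6}{12}$,
\[
\frac{f(k+1)}{f(k)}
= \frac{1}{d}\cdot\frac{(d-k+1)!}{(d-k)!}\cdot\frac{(k+7)/12}{(k+6)/12}
= \frac{(d-k+1)(k+7)}{d\,(k+6)}.
\]
Then I would rewrite the inequality $f(k+1) \ge f(k)$ as $(d-k+1)(k+7) \ge d(k+6)$. Expanding both sides, the terms $dk$ and $6d$ cancel and one is left with the condition $d - k^2 - 6k + 7 \ge 0$, which is equivalent to the perfect-square inequality
\[
(k+3)^2 \le d+16, \qquad\text{i.e.,}\qquad k \le -3 + \sqrt{d+16}.
\]

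Hence $f$ is non-decreasing on $\{k\in\N : k \le -3+\sqrt{d+16}\}$ and non-increasing for larger $k$, so its maximum over $\N$ is attained at the smallest integer $k$ with $k \ge -3+\sqrt{d+16}$, which is precisely $k = -3 + \lceil\sqrt{d+16}\rceil$. Since this candidate is of order $\sqrt{d}$, it automatically satisfies $k \le d+1$, so the factorial $(d-k+1)!$ is well defined.

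The argument is essentially routine discrete calculus; the main point worth flagging is the boundary case when $d+16$ is a perfect square. There $(k+3)^2 = d+16$ at the threshold, giving $f(k) = f(k+1)$, so both points are maximizers; the formula $-3 + \lceil \sqrt{d+16}\rceil$ simply selects the smaller of the two equal maxima, which is still valid. The only step requiring algebraic care is the cancellation producing the perfect square $(k+3)^2$, which is what makes the closed form with a single ceiling possible.
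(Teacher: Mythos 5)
Your proof is correct and follows essentially the same route as the paper: compute the ratio $f(k+1)/f(k) = \frac{k+7}{k+6}\cdot\frac{d-k+1}{d}$, reduce $f(k+1)\ge f(k)$ to the quadratic inequality $(k+3)^2 \le d+16$, and read off the integer maximizer as $-3+\lceil\sqrt{d+16}\rceil$. Your explicit discussion of the tie case (when $d+16$ is a perfect square) and of well-definedness of the factorial is a bit more careful than the paper's terse "This implies the claim," but the underlying argument is identical.
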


\begin{proof}
	We have
\begin{align*}
	\frac{f(k+1)}{f(k)} & = \frac{k+7}{k+6} \cdot \frac{d-k+1}{d}.
\end{align*}
	This expression is ${\le} 1$ if and only if
\begin{align*}
	\frac{k+7}{k+6} (d-k+1) &= \frac{k+7}{k+6} d + \frac{(k+7)(1-k)}{k+6} {\le} d.
\end{align*}
which is yet equivalent to
\begin{align*}
	- k^2 - 6k + 7 + d & {\le} 0
\end{align*}
or, in other words,
\begin{align*}
	k &{ \ge} -3 + \sqrt{16+d}.
\end{align*}
This implies the claim.
\end{proof}

\begin{lemma} \label{cor:max}
	Given $d \in \mathbb{N}$, the function
	$$g_d(k) = \frac{d!}{d^{k-1} (d-k+1)!}  \left( \frac{k}{12} + \frac{1}{2} + \frac{1}{k+1} \right) $$
	with $k \in \mathbb{N}$ takes its maximum in  $k_{\rm max} \le \min\{  -3 + {\lceil \sqrt{16+d} \rceil}\,,\, 1 + { \lceil 0.0544 \cdot d \rceil} \}$.
\end{lemma}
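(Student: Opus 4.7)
The plan is to bound $k_{\max}$ by the two candidate quantities separately, in each case by exhibiting a threshold beyond which $g_d$ is non-increasing. Write $A := -3+\lceil\sqrt{16+d}\rceil$ and $B := 1+\lceil 0.0544\,d\rceil$. The bound $k_{\max}\le A$ will follow from a decomposition reducing matters to Lemma~\ref{Lemma:Maximum_1}, while the potentially tighter bound $k_{\max}\le B$ (which is the new content when $B<A$) will require a direct analysis of the ratio $g_d(k+1)/g_d(k)$.

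For $k_{\max}\le A$, I would write
$$
g_d(k) \;=\; f(k) + h(k), \qquad h(k) := \frac{d!}{d^{k-1}(d-k+1)!}\cdot\frac{1}{k+1},
$$
where $f$ is exactly the function of Lemma~\ref{Lemma:Maximum_1}. That lemma gives $f(k+1)\le f(k)$ for all $k\ge A$. For the auxiliary term $h$ one easily computes $h(k+1)/h(k)=\tfrac{(d-k+1)(k+1)}{d(k+2)}$, and the condition $\le 1$ reduces to the unconditional $k^2\ge 1-d$. Thus $h$ is non-increasing on $\N$, so $g_d=f+h$ is non-increasing on $\{k\ge A\}$ and hence $k_{\max}\le A$.

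For $k_{\max}\le B$, I would analyse
$$
\frac{g_d(k+1)}{g_d(k)} \;=\; \frac{d-k+1}{d}\cdot\frac{Q(k+1)}{Q(k)}, \qquad Q(k):=\frac{k+6}{12}+\frac{1}{k+1},
$$
directly. Clearing denominators and performing one polynomial division (the remainder conveniently turns out to be the clean constant $144$) converts the condition $g_d(k+1)\le g_d(k)$, for $k\ge 3$, into
$$
d\;\le\;\phi(k) \;:=\; k^2+6k+17+\frac{144}{k^2+3k-10}.
$$
A short estimate shows $\phi$ is strictly increasing on $\{3,4,\ldots\}$, so it is enough to prove $d\le\phi(B)$; since $B$'s definition yields $d\le (B-1)/0.0544$, the entire problem collapses to the scalar inequality $(B-1)/0.0544 \le \phi(B)$ for integer $B\ge 3$. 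The boundary case $B=2$ (which forces $d\le 18$) is handled by a separate easy check: $g_d(3)/g_d(2)=(d-1)/d\le 1$, and $d\le 18<62=\phi(3)$ then dispatches all $k\ge 3$ via the previous reduction.

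The hard part will be the verification of $(B-1)/0.0544\le\phi(B)$ for $B\ge 3$. After multiplying through, this is equivalent to $0.0544\,B^2-0.6736\,B+1.9248+7.8336/(B^2+3B-10)\ge 0$. The quadratic part has its two real roots inside the interval $(4.47,7.91)$, so for integer $B\le 4$ or $B\ge 8$ the quadratic alone is already non-negative and the rational correction only helps; the three critical values $B\in\{5,6,7\}$ must be verified by direct substitution, where the rational correction barely compensates the small negative value of the quadratic. The constant $0.0544$ is evidently chosen so that the check at $B=7$ is essentially tight, which is why a more cavalier estimate would collapse the $B$-bound.
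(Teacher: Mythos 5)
Your proof is correct. The first bound $k_{\max}\le A := -3+\lceil\sqrt{16+d}\rceil$ is argued exactly as in the paper: decompose $g_d = f + h$, invoke Lemma~\ref{Lemma:Maximum_1} for $f$, and check directly that the auxiliary term $h(k)=\tfrac{d!}{d^{k-1}(d-k+1)!}\cdot\tfrac{1}{k+1}$ is non-increasing. For the second bound $k_{\max}\le B := 1+\lceil 0.0544\,d\rceil$ you take a genuinely different route from the paper's. You invert the descent condition $g_d(k+1)\le g_d(k)$ (for $k\ge 3$) into $d\le\phi(k)$ with $\phi(k)=k^2+6k+17+144/(k^2+3k-10)$, note that $\phi$ is increasing on $\{3,4,\ldots\}$, and reduce to the scalar inequality $(B-1)/0.0544\le\phi(B)$, whose only non-trivial checks are at $B\in\{5,6,7\}$, together with the easy case $B=2$. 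The paper instead normalizes the ratio: it writes $g_d(k+1)/g_d(k)=\tfrac{d-k+1}{d}\,u(k)$ with the $d$-free factor $u(k)=\tfrac{k+1}{k+2}\cdot\tfrac{k^2+9k+26}{k^2+7k+18}$, observes that $u$ attains its maximum over $\N$ at $k=7$ with $u(7)\approx 1.0575$, and concludes $k_{\max}\le k_0$ whenever $\tfrac{d-k_0+1}{d}\,u(7)\le 1$, i.e. $k_0-1\ge d\bigl(1-1/u(7)\bigr)\approx 0.05435\,d$; since $0.05435\ldots<0.0544$, the stated bound follows. The paper's normalization localizes all the numerics at the single point $k=7$ and makes the constant $0.0544$ transparent as a rounding-up of $1-1/u(7)$, while your rearrangement trades that insight for a polynomial division (with the conveniently clean remainder $144$) and a three-point numerical check. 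Both arguments are sound and yield the same conclusion.
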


\begin{proof}
Since $r(k) := g_d(k)-f(k)$ is strictly monotone decreasing on $\N$, we obtain from Lemma~\ref{Lemma:Maximum_1} that  $k_{\rm max} \le  -3 + {\lceil \sqrt{16+d} \rceil}$, because $g(k) = f(k) + r(k)$ and both $f(k_1) < f( -3 + {\lceil \sqrt{16+d} \rceil})$ and $r(k_1) < r( -3 + {\lceil \sqrt{16+d} \rceil})$ for all $k_1 >  -3 + {\lceil \sqrt{16+d} \rceil}$. Furthermore, we have for all $1 \le k < d$
$$
h(k) := \frac{g_d(k+1)}{g_d(k)} = \frac{d-k+1}{d} \cdot \frac{k+1}{k+2} \cdot \frac{k^2 + 9k + 26}{k^2 + 7k + 18},
$$
and 
$$
u(k) := \frac{d}{d-k+1} h(k)
$$ 
satisfies $\lim_{k\to \infty} u(k) =1$. It is easily checked that $u(k)$ takes its maximum in $k=7$, for $k \in \mathbb{N}$, namely 
$u(7)= 1.05747126.$ Hence, if $k_0 = k_0(d) \in \N$ satisfies the estimate
$$\frac{d-k_0+1}{d} \cdot 1.05747126 \le 1,$$
then all $k\ge k_0$ satisfy it, too, implying $h(k)\le 1$ and thus $g_d(k) \ge g_d(k+1)$. This yields the necessary condition 
$$k_{\rm max} \le 1 + {\lceil d \cdot 0.0544 \rceil}.$$
\end{proof}

\begin{proof}[Proof of Theorem~\ref{thm:general_bracketing_number}] Let $b_d := \max\left(1.1^{d-101},1\right)$ and $n:= \lceil \delta^{-1} \rceil$.  We proceed by induction over the dimension $d$. For $d=1$ the claim is obviously true. Let now $d\ge 2$.
Analogously to the proof of Theorem 1.15 in \cite{Gne08} the induction hypothesis with induction step $d-1 \to d$ yields
	\begin{align*}
		N_{[\ ]}(d, \delta) & \le \sum_{k=1}^{d-1} \binom{d}{k} b_{d-k} \frac{d^{d-k}}{(d-k)!} \left( \sum_{i=1}^{n-1} \left(\delta^{-1} - i + 1 \right)^{d-k} \right) + \delta^{-1} + 1
	\end{align*}
	(where in \cite{Gne08} $b_\ell$ was actually equal to $2^\ell$ for all $\ell \in \N$).
Applying the generalized Faulhaber inequality from Theorem~\ref{prop:Faulhaber} with $j = d-k$ and $r = \delta^{-1} - n + 1$ we get
	\begin{equation}\label{threestar}
	\begin{split}
		N_{[\ ]}(d, \delta) \le \sum_{k=1}^{d-1} \binom{d}{k} &b_{d-k} \frac{d^{d-k}}{(d-k)!} \left( \frac{1}{d-k+1} (\delta^{-1})^{d-k+1} + \frac{1}{2} (\delta^{-1})^{d-k} + \frac{d-k}{12} (\delta^{-1})^{d-k-1} \right)\\
		& + \delta^{-1} + 1.
\end{split}
\end{equation}
The right hand side of inequality~\eqref{ineq:brack} is equal to
\begin{align}\label{fourstar}
 	b_d \frac{d^d}{d!} (\delta^{-1} + 1)^d = \sum_{k=0}^d b_d \binom{d}{k} \frac{d^d}{d!} (\delta^{-1})^{d-k} = \sum_{k=0}^d b_d  \frac{d^d}{k!(d-k)!} (\delta^{-1})^{d-k}.
\end{align}
Now we compare the coefficients in front of the powers $p$ of $\delta^{-1}$ in \eqref{threestar} and \eqref{fourstar}. We start with the lowest powers, $p=0$ and $p=1$.
For $p=0$ the coefficient in \eqref{threestar} is 
$${d \choose d-1} b_1 \frac{d}{12} + 1 = \frac{d^2}{12} + 1$$
and the corresponding coefficient in \eqref{fourstar} is $b_d \frac{d^d}{d !}$.
It is easily checked that
$$ \frac{d^2}{12} + 1  \le \frac{d^d}{d !} \le b_d \frac{d^d}{d !} \hspace{3ex}\text{for all $d\ge 2$.}$$
For $p=1$ the coefficient in \eqref{threestar} is
$$\frac{d^2}{2} + \frac{d^3(d-1)}{24} + 1$$
and the corresponding coefficient in \eqref{fourstar} is $b_d \frac{d^d}{(d-1)! }$, and again it is easily verified that 
$$\frac{d^2}{2} + \frac{d^3(d-1)}{24} + 1 \le \frac{d^d}{(d-1)! } \le b_d \frac{d^d}{(d-1)! }
\hspace{3ex}\text{for all $d\ge 2$.}$$
For $p = d$ the corresponding coefficients in \eqref{threestar} 
is not larger than the one in \eqref{fourstar}, since $b_{d-1}\le b_d$. 
For $p= d-1$, $p\ge 2$, it can be easily checked that the coefficient in \eqref{threestar} is strictly less than the one in \eqref{fourstar}. Next we compare the coefficients for powers $2 \le p \le d-2$, i.e., for powers $d-k$ with $2 \le k \le d- 2$.   {In \eqref{threestar} the respective coefficient is given by}
\begin{align*}
	& \frac{d^{d-k-1}}{(d-k)!} \left( b_{d-k-1} \binom{d}{k+1} + b_{d-k} \frac{d}{2} \binom{d}{k} + b_{d-k+1} \frac{d^2}{12} \binom{d}{k-1} \right)\\
	&  \leq \frac{d^{d-k-1}}{(d-k)!}  b_{d-k+1} \underbrace{\left(  \binom{d}{k+1} + \frac{d}{2} \binom{d}{k} + \frac{d^2}{12} \binom{d}{k-1} \right)}_{=:a_{d-k}}.
\end{align*}
{To show that this  coefficient is smaller than the corresponding one from \eqref{fourstar} it suffices to show that 
\begin{equation}\label{hinreichende_ungleichung}
\frac{b_{d-k+1}}{b_d} \frac{k!}{d^{k+1}} a_{d-k} \le 1.
\end{equation}
}
{ At first, let us consider the case $d \leq 101$, i.e. $b_d = 1= b_{d-k+1}$. 
We  obtain} the inequality
\begin{equation}\label{twostar}
\begin{split}
	A_{d-k} := k! \cdot \frac{a_{d-k}}{d^{k+1}}  =& \frac{d!}{d^{k-1}(d-k+1)!} \cdot \frac{1}{d^2} \left( \frac{(d-k+1)\cdot(d-k)}{k+1} + \frac{d\cdot(d-k+1)}{2} + \frac{d^2 \cdot k}{12} \right)\\
	<& \frac{d!}{d^{k-1}(d-k+1)!} 
	 \underbrace{\left( \frac{1}{k+1} + \frac{1}{2} + \frac{k}{12} \right)}_{{= f_0(k)}} = g_d(k) \leq g_d(k_{\rm max}) {\le f_0(k_{\max}),}
\end{split}
\end{equation}
{where $k_{\max}$ is as in  Lemma~\ref{cor:max}.}
{From this and Lemma~\ref{cor:max} we get for $d \leq 36$ that $A_{d-k} \le 1$,} because
 $k_{\rm max} \le \min\{  -3 + \lceil \sqrt{16+d} \rceil\,,\, 1 + \lceil 0.0544 \cdot d \rceil \} <4$ {and  hence $f_0(k_{\max}) \le 1$}.
It can be checked by a simple computer calculation of all possible values of $A_{d-k}$ that in fact $A_{d-k} \leq 1$ holds for $d \leq 101$. Thus {\eqref{hinreichende_ungleichung} holds and consequently} the coefficients in \eqref{threestar} are smaller than those in \eqref{fourstar} for all powers of $\delta^{-1}$ and the claim follows for $d \leq 101$. Now let $d > 101$. For all $k$ with $d-k+1 > 101$ we see that 
$$ \frac{b_{d-k+1}}{b_d} = (1.1)^{-k+1}.$$ 
Thus $\frac{b_{d-k+1}}{b_d} \leq \frac{1}{k}$ for $k \geq 40$ 
implying
$$ {\frac{b_{d-k+1}}{b_d} A_{d-k} \le}
 \frac{b_{d-k+1}}{b_d} {f_0(k)}  \leq \frac{1}{k} {f_0(k)} < 1$$
in these cases. Moreover the inequality
$$ \frac{b_{d-k+1}}{b_d} {f_0(k)} < 1$$
can be checked in the remaining cases $k=2,\ldots, 39$ by hand. Hence, {due to \eqref{hinreichende_ungleichung},} the corresponding coefficients in \eqref{threestar} are smaller than those in \eqref{fourstar}. If $d - k + 1 \leq 101$, then we get from \eqref{twostar}
\begin{align*} 
	\frac{b_{d-k+1}}{b_d} A_{d-k} {=  \frac{1}{1.1^{d-101}}
	 A_{d-k} \le   \frac{1}{1.1^{d-101}} g_d(k_{\max}) \le  \frac{1}{1.1^{d-101}} f_0(k_{\max}) =: \tilde{g}_d(k_{\max})}
\end{align*}
for all {$d-100 \le k \le d-2$}. A direct calculation yields {$k_{\rm max}(102) = 7$ and $g_{102}(7) < 1.01$}. Hence \eqref{hinreichende_ungleichung} is true for $d=102$. 
{Note that $f_0(k)$ is monotonic increasing for $k\ge 2$ and, due to
Lemma~\ref{cor:max}, 
$$k_{\max} = k_{\rm max}(d) \le \min\{-3 + \lceil \sqrt{16+d} \rceil, 1 + \lceil 0.0544 \cdot d \rceil \} =:k_0(d)$$
which is monotonic increasing as well, we have that
$$\frac{b_{d-k+1}}{b_d} A_{d-k} \leq \tilde{g}_d(k_0(d)).$$
Now $k_0(103) = 7$ implies
 $$\tilde{g}_{103}(k_{\max}(103)) \le \tilde{g}_{103}(k_{0}(103)) < 0.999.$$
Since $f_0(k+1)/f_0(k) < 1.1$ for $k\ge 7$ and 
$|k_0(d+1) - k_0(d)| \le 1$ for all $d$, we see that $\tilde{g}_d(k_{\max}(d)) \le 1$ for all $d\ge 103$.
This establishes \eqref{hinreichende_ungleichung} in the case $d > 101$ and finishes the proof.}
\end{proof}

\begin{remark} Note that our argumentation would fail when using the complete Faulhaber formula instead of the inequality from Theorem~\ref{prop:Faulhaber} because there appear negative terms in it. \end{remark}

\section{Application to Discrepancy Bounds}

In this section, we apply the new general upper bound on the bracketing number from Theorem~\ref{thm:general_bracketing_number} to derive new discrepancy bounds for the smallest achievable star-discrepancy and weighted star-discrepancy with explicitly given dependence on the number of points $N$ and on the dimension $d$. We will not restrict our analysis to classical Monte Carlo point sets but use the more general notion of negatively dependent sampling which we introduce first.

\subsection{Negative Dependence of Sampling Schemes}\label{SUBSEC:Neg_Dep} 

Recall that classical Monte Carlo involves random samples which are independent and identically uniformly distributed on $[0,1)^d$. Monte Carlo methods do not yield deterministic convergence and hence also result in random errors and the variance of the estimation has to be taken into account. Therefore, several variance reduction techniques for Monte Carlo have been established, see, e.g., \cite{Gla03}, Chapter~4. For this and other similar reasons, researchers proposed to use randomized Quasi-Monte Carlo methods, see, e.g., \cite{CP76}, \cite{Owe95} or \cite{LEc17}. A very recent research direction is to use notions of negative dependence to analyze random points sets, see, e.g., \cite{GH16}, \cite{Lem18}, \cite{WG19}, \cite{WGH19}, \cite{WLD19}. 
This line of research is motivated in detail in the introductions of the papers \cite{DDG18, GH16, Lem18}.
We consider here the notions of negative dependence studied in \cite{GH16} and \cite{WGH19}.

\begin{definition} Let $\gamma \geq 1$. The binary random variables $T_1, T_2, \ldots, T_N$ are called \textbf{upper $\gamma$-negatively dependent} if
	$$\mathds{P} \left( \bigcap_{j \in u} \left\{ T_j = 1 \right\} \right) \leq \gamma \prod_{j \in u} \mathds{P} \left( T_j = 1 \right), \quad \textrm{for all } u \subset [N]$$
	and  \textbf{lower $\gamma$-negatively dependent} if
	$$\mathds{P} \left( \bigcap_{j \in u} \left\{ T_j = 0 \right\} \right) \leq \gamma \prod_{j \in u} \mathds{P} \left( T_j = 0 \right), \quad \textrm{for all } u \subset [N],$$
	where the set $\left\{1, 2, \ldots,N \right\}$ is denoted by $[N]$.
	If both conditions are satisfied, then $T_1, T_2, \ldots, T_N$ are called \textbf{$\gamma$-negatively dependent}.
\end{definition}

In this paper, we are interested in binary random variables $T_i, i=1,\ldots,N$, of the form $T_i = \mathds{1}_A(X_i)$, where $A$ is a Lebesgue-measurable subset of $[0,1]^d$, $\mathds{1}_A$ denotes the charactaristic function of $A$, and $X_1,\ldots,X_N$ are randomly chosen points in $[0,1]^d$. If such a sequence is $\gamma$-negatively dependent, then it fulfills special bounds of Hoeffding- and Bernstein-type, respectively, see e.g. \cite{Heb12}.
\begin{definition} A randomized point set $P=(p_j)_{j=1}^N$ in $[0,1)^d$ is called a \textbf{sampling scheme}, if every single $p \in P$ is distributed uniformly in $[0,1)^d$ and the vector $(p_1,\ldots,p_N)$ is exchangeable, meaning that for any permutation $\pi$ on $[N]$ it holds that the probability distribution of $(p_1,\ldots,p_N)$ is the same as the one of $(p_{\pi(1)},\ldots,p_{\pi(N)})$.
\end{definition}

The assumption of exchangeability is easily satisfied.
Let $\tilde{P}$ be an $N$-point set such that every $\tilde{p} \in \tilde{P}$ is uniformly distributed in $[0,1)^d$ and let $\pi$ be a random uniformly chosen permutation of $[N]$. Then $P:= (\tilde{p}_{\pi(j)})_{j=1}^n$ is already a sampling scheme. 

\begin{definition} For a given set $\mathcal{S}$ of Lebesgue measurable subsets of $[0,1)^d$ we say that a sampling scheme $P=(p_j)_{j=1}^N$ is \textbf{$\mathcal{S}$-$\gamma$-negatively dependent} if for every $Q \in \mathcal{S}$ the random variables $(\mathds{1}_Q(p_j))_{j=1}^N$ are $\gamma$-negatively-dependent.
\end{definition}
In this paper, we are interested in two special cases, namely $\mathcal{S} \in \left\{ \mathcal{C}_0^d, \mathcal{D}_0^{d} \right\}$, where
$$\mathcal{C}_0^d := \left\{ [0,a) : a \in [0,1)^d \right\}$$
is the set of boxes anchored at $0$ (sometimes called \textit{corners}) and 
$$\mathcal{D}_0^{d}  := \left\{ Q \setminus R : Q,R \in \mathcal{C}_0^d \right\}$$
is the set of differences of boxes from $\mathcal{C}_0^d$.

\subsection{Bounds for the Star-Discrepancy}\label{SUBSEC:Star_Discrepancy}

For deriving discrepancy bounds, we will use an intermediate result of the proof of Theorem~4.4 in \cite{GH16} and therefore stick to the notation therein. Hence, we need to introduce some variables. First let $\mu \in \mathbb{N}, \mu \geq 2$ be arbitrary and 
$$c_\mu := \frac{1}{1-\sqrt{\frac{\mu+1}{2\mu}}}.$$
Furthermore let $c_0, c_1 > 0$ be two constants and choose $K\ge \mu$ as the smallest natural number with
$$\frac{1}{\sqrt{K2^K}} \leq c_0 c_1 c_\mu \sqrt{\frac{d}{N}}$$
and set
$$\tau_\mu := \frac{c_1^2}{4(1+1/(3c_\mu))} \quad \text{implying} \quad c_1 = \sqrt{4\tau_{\mu}\left(1+\frac{1}{3c_{\mu}} \right)}.$$
Note that $c_0$ may still be chosen arbitrarily. For each $\mu \leq k \leq K$ we choose a $2^{-k}$-cover $\Gamma_k$ of minimum size.

\begin{lemma}[\cite{GH16}, Proof of Theorem 4.4] \label{lem:GH} Let $d, N \in \mathbb{N}$ and $\rho \in [0,\infty)$. Let $X=(X_j)_{j=1}^N$ be a $\mathcal{D}_0^d$-$e^{\rho d}$-negatively dependent sampling scheme. Then there exist events $E_k$, $k= \mu, \ldots, K$, with 
		\begin{align} \label{ineq:1}
	\mathds{P}(E_\mu^c) \leq 2 |\Gamma_\mu|e^{\rho d} e^{-2c_0^2d}
	\end{align}
	and
	\begin{align} \label{ineq:2}
	\mathds{P}(E_k^c) \leq 2 |\Gamma_{k}|e^{\rho d} e^{-2c_0^2\tau_\mu(k-1)d}
	\end{align}
	for $\mu < k \le K$ and if the event
	$$E := \bigcap_{k=\mu}^K E_k$$
         occurs then any realization $P$ of $X$ satisfies
	$$D_N^*(P) \leq c_0 \left( 1 + c_1c_\mu \sqrt{\frac{\mu}{2^\mu}} \right) \sqrt{\frac{d}{N}}.$$	
\end{lemma}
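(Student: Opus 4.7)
The plan is a multiscale (``chaining'') argument along the sequence of covers $\Gamma_\mu,\ldots,\Gamma_K$. At the coarsest scale $\mu$ I control the empirical deviation at every anchor of $\Gamma_\mu$ by a Hoeffding-type bound for negatively dependent binary variables, while at each finer scale $k>\mu$ I only need to control \emph{increments} between consecutive levels; the small Lebesgue measure of these increments feeds into the variance term of Bernstein's inequality and produces an exponent that scales linearly in $k-1$. The rates in \eqref{ineq:1} and \eqref{ineq:2} emerge as the resulting Hoeffding and Bernstein exponents, and the prefactors $|\Gamma_\mu|$ and $|\Gamma_k|$ come from union bounds over the respective covers.

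Concretely, I would define $E_\mu$ as the event that
\[
\Bigl|\tfrac{1}{N}\bigl|P\cap[0,z)\bigr|-\lambda_d([0,z))\Bigr|\le c_0\sqrt{\tfrac{d}{N}}
\quad\text{for every }z\in\Gamma_\mu.
\]
Since $[0,z)\in\mathcal{C}_0^d\subset\mathcal{D}_0^d$ (take $R=\emptyset$), the indicators $(\mathds{1}_{[0,z)}(X_j))_{j=1}^N$ are $e^{\rho d}$-negatively dependent, and a Hoeffding-type bound for such variables (see, e.g., \cite{Heb12}) gives an individual tail of $2e^{\rho d}e^{-2c_0^2 d}$. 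Union-bounding over $\Gamma_\mu$ yields \eqref{ineq:1}.

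For $k>\mu$, I would associate to each $z\in\Gamma_k$ a companion anchor $\pi_{k-1}(z)\in\Gamma_{k-1}\cup\{0\}$ with $[0,z)\subseteq[0,\pi_{k-1}(z))$ and $\lambda_d\bigl([0,\pi_{k-1}(z))\setminus[0,z)\bigr)\le 2^{-(k-1)}$; such a $\pi_{k-1}(z)$ exists because $\Gamma_{k-1}$ is a $2^{-(k-1)}$-cover. The increment sets lie in $\mathcal{D}_0^d$ and their indicators are $e^{\rho d}$-negatively dependent with per-sample variance at most $2^{-(k-1)}$. Let $E_k$ be the event that every corresponding empirical increment deviates from its mean by at most $c_0 c_1\sqrt{(k-1)d/(N\,2^{k-1})}$. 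Applying Bernstein's inequality (again \cite{Heb12}), the exponent $Nt^2/(2\sigma_1^2+2t/3)$ with $\sigma_1^2\le 2^{-(k-1)}$ evaluates to
\[
\frac{c_0^2 c_1^2 (k-1)d/2}{1+c_0 c_1\sqrt{(k-1)d\,2^{k-1}/N}/3}.
\]
The defining property of $K$ — namely $1/\sqrt{K 2^K}\le c_0 c_1 c_\mu\sqrt{d/N}$ — implies $c_0 c_1\sqrt{(k-1)d\,2^{k-1}/N}\le 1/c_\mu$ for every $\mu<k\le K$, so the denominator is at most $1+1/(3c_\mu)$; combined with $c_1^2=4\tau_\mu(1+1/(3c_\mu))$, the exponent is then at least $2c_0^2\tau_\mu(k-1)d$. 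A union bound over $\Gamma_k$ produces \eqref{ineq:2}.

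Finally, on $E=\bigcap_{k=\mu}^K E_k$, given any $y\in[0,1)^d$, I would build a descending chain $z_\mu\ge z_{\mu+1}\ge\cdots\ge z_K\ge y$ with $z_k\in\Gamma_k\cup\{0\}$, $\lambda_d([0,z_k])-\lambda_d([0,y])\le 2^{-k}$, and $z_{k-1}=\pi_{k-1}(z_k)$. Telescoping over the levels yields
\[
\tfrac{1}{N}\bigl|P\cap[0,z_K)\bigr|-\lambda_d([0,z_K))=\Bigl(\tfrac{1}{N}\bigl|P\cap[0,z_\mu)\bigr|-\lambda_d([0,z_\mu))\Bigr)-\sum_{k=\mu+1}^K\delta_k,
\]
where $\delta_k$ is the level-$k$ increment deviation controlled by $E_k$. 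Using Lemma~\ref{Lemma:3.1} applied to the cover $\Gamma_K$ to handle the gap from $z_K$ back to $y$, the discrepancy at $y$ is bounded by $c_0\sqrt{d/N}$ (from $E_\mu$) plus $c_0 c_1\sqrt{d/N}\sum_{j=\mu}^{K-1}\sqrt{j/2^j}$ (from the $\delta_k$) plus $2^{-K}$. The tail series is controlled by $c_\mu\sqrt{\mu/2^\mu}$ via the geometric overestimate with ratio $\sqrt{(\mu+1)/(2\mu)}$ — precisely the content of the definition of $c_\mu$ — and the $2^{-K}$ slack is absorbed by the choice of $K$. The main obstacle is the Bernstein bookkeeping in the third step: threading the precise definitions of $c_1$, $\tau_\mu$, $c_\mu$, and $K$ through the Bernstein quadratic so that the exponent lands exactly on $2c_0^2\tau_\mu(k-1)d$ is where all constants must fit together.
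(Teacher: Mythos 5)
This lemma is not proved in the paper at all --- it is quoted verbatim as an intermediate result from the proof of Theorem~4.4 in [GH16], so there is no in-paper argument to compare against. Your reconstruction is the dyadic-chaining method that [GH16] (following [Ais11], [AH14]) actually uses: Hoeffding on the coarse grid $\Gamma_\mu$, Bernstein on the level increments (with variance $\le 2^{-(k-1)}$ feeding the exponent $2c_0^2\tau_\mu(k-1)d$ after the definitions of $K$, $\tau_\mu$, $c_1$ are threaded through the Bernstein denominator), union bounds over the covers, and a geometric tail estimate whose ratio $\sqrt{(\mu+1)/(2\mu)}$ is exactly where $c_\mu$ comes from --- with the $2^{-K}$ slack from Lemma~\ref{Lemma:3.1} absorbed into the remaining $r^{K-\mu}/(1-r)$ piece of the geometric series, as you indicate. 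The bookkeeping you sketch is correct and faithful to the cited source.
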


From Lemma~\ref{lem:GH} and Theorem~\ref{thm:general_bracketing_number} it follows that the star-discrepancy of a negatively dependent sampling scheme fulfills
$$D_N^*(X) \leq c \sqrt{\frac{d}{N}}$$
with high probability as long as $c$ is sufficiently large.

\begin{theorem} \label{thm:improved_star_discrepancy_bound} 
Let $d, N \in \mathbb{N}$ with $d \geq 2$ and $\rho \in [0,\infty)$. Let $X=(X_j)_{j=1}^N$ be a $\mathcal{D}_0^d$-$e^{\rho d}$-negatively dependent sampling scheme. Then for every $c > 2.4968$
\begin{equation}\label{est:disc*}
	D_N^*(X) \leq c \sqrt{\frac{d}{N}}
\end{equation}
	holds with probability at least $1 - e^{-(1.67681c^2-10.45292-\rho) \cdot d}$ implying that for every $q \in (0,1)$
	\begin{align} \label{ineq:star_discrepancy}
	D_N^*(X) \leq 0.7723 \sqrt{10.45292 + \rho + \frac{\log((1-q)^{-1})}{d}} \sqrt{\frac{d}{N}}
	\end{align}
	holds with probability at least $q$.
\end{theorem}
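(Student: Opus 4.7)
The strategy is to apply Lemma~\ref{lem:GH} and control the bad-event probability using the improved bracketing bound from Theorem~\ref{thm:general_bracketing_number}. Since a minimum-size $2^{-k}$-cover $\Gamma_k$ of $[0,1]^d$ satisfies $|\Gamma_k| \le 2 N_{[\ ]}(d, 2^{-k})$ (by the relationship between covering and bracketing numbers recalled after the definition of $\delta$-covers), Theorem~\ref{thm:general_bracketing_number} yields
$$|\Gamma_k| \le 2 \max(1.1^{d-101},1)\, \frac{d^d}{d!}\, (2^k+1)^d.$$
Combined with Stirling's estimate $d^d/d! \le e^d/\sqrt{2\pi d}$, this gives a cover-size bound that is exponential in $d$ with an essentially explicit exponent.

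Next, I would apply the union bound $\mathds{P}(E^c) \le \mathds{P}(E_\mu^c) + \sum_{k=\mu+1}^K \mathds{P}(E_k^c)$, insert the tail bounds~\eqref{ineq:1} and \eqref{ineq:2}, and plug in the cover-size estimate. Provided the decay rate $2c_0^2 \tau_\mu$ is chosen strictly larger than $\log 2$, the $k$-tail is a geometric series dominated up to a constant factor by its first summand, so the whole estimate collapses to a single exponential of the form $\exp\bigl(-(\alpha(c_0,c_1,\mu) - \rho)\, d + O(\log d)\bigr)$, where $\alpha$ collects the contributions from $2c_0^2$, $\mu \log 2 + \log(1 + 2^{-\mu})$, and the Stirling/$1.1^{d-101}$ corrections.

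The core technical obstacle is the parameter tuning: with $c_\mu = 1/(1 - \sqrt{(\mu+1)/(2\mu)})$, $c_1 = \sqrt{4\tau_\mu(1 + 1/(3c_\mu))}$ and the discrepancy constant $c = c_0\bigl(1 + c_1 c_\mu \sqrt{\mu/2^\mu}\,\bigr)$, I would fix a small integer $\mu$ (a value around $\mu = 7$ turns out to balance $c_\mu \sqrt{\mu/2^\mu}$ against $\mu\log 2$ well), and then tune $(c_0, \tau_\mu)$ so as to maximize the exponent $\alpha(c_0,c_1,\mu)$ subject to $c_0\bigl(1 + c_1 c_\mu \sqrt{\mu/2^\mu}\,\bigr) = c$. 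Carrying this optimization out produces the explicit coefficients $1.67681\, c^2 - 10.45292$ in the exponent and identifies $c = 2.4968$ as the threshold above which the exponent is strictly positive (indeed, $1.67681 \cdot (2.4968)^2 \approx 10.4529$). This establishes the first bound~\eqref{est:disc*}.

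The second bound~\eqref{ineq:star_discrepancy} follows purely algebraically from the first: setting $1 - e^{-(1.67681\, c^2 - 10.45292 - \rho)d} = q$ and solving for $c$ gives
$$c = \frac{1}{\sqrt{1.67681}}\, \sqrt{10.45292 + \rho + \frac{\log((1-q)^{-1})}{d}},$$
and the numerical identity $1/\sqrt{1.67681} \approx 0.7723$ completes the proof.
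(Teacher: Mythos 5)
Your strategy is exactly the paper's: apply Lemma~\ref{lem:GH}, bound $|\Gamma_k|\le 2N_{[\ ]}(d,2^{-k})$ via Theorem~\ref{thm:general_bracketing_number} and Stirling, run the union bound with the geometric tail, then relate $c$ to $c_0$ via $c=c_0(1+c_1c_\mu\sqrt{\mu/2^\mu})$ and optimize, and finally obtain \eqref{ineq:star_discrepancy} by solving $q=1-e^{-(1.67681c^2-10.45292-\rho)d}$ for $c$. Two small caveats that keep the sketch from being a complete reproduction: the paper's concrete parameter choice is $\mu=12$ and $\tau_\mu=0.0871$, not $\mu\approx 7$ (with $\mu=7$ the factor $c_\mu\sqrt{\mu/2^\mu}\approx 0.96$ is far from negligible, whereas with $\mu=12$ it is about $0.21$, and the resulting constants would not be the stated $1.67681$ and $10.45292$), and the ``$O(\log d)$'' corrections you mention are in fact absorbed exactly by checking numerically that the Stirling prefactor $\sqrt{2/(\pi d)}$ times the $(1+\text{geometric tail})$ factor is at most $1$ at $d=2$, $\rho=0$, and then arguing monotonicity in $d$ and $\rho$; this step is what makes the final probability bound clean rather than carrying a $\log d$ term. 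With those two points filled in, the proposal coincides with the paper's proof.
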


\begin{proof} We consider the sets $E_\mu, \ldots, E_K$, and $E$ from Lemma~\ref{lem:GH}. Then Theorem~\ref{thm:general_bracketing_number} and Sterling's formula imply

	$$|\Gamma_k| \leq 2\frac{1}{\sqrt{2 \pi d}} b_d e^d (2^k+1)^d$$
	with $b_d := \max \left( 1 , 1.1^{d-101} \right)$. Defining $\sigma := \mu - \log(2^\mu+1) - 1 - \frac{\log(b_d)}{d}-\tfrac{\log(2)}{d}$ and \eqref{ineq:1} yield
	$$\mathds{P}(E_\mu^c) \leq \sqrt{\frac{2}{\pi d}} e^{-(2c_0^2-\rho-\mu+\sigma)d}.$$ 
	Similarly, we get from \eqref{ineq:2} the inequality
	\begin{align*}
	\mathds{P}(E_k^c) & \leq 2\sqrt{\frac{2}{\pi d}} b_d e^d 2^{kd} \left(1+2^{-k}\right)^d e^{\rho d}\exp\left(-2c_0^2 \tau_\mu (k-1)d\right) \\
		& \leq \sqrt{\frac{2}{\pi d}} e^{\left(1 + \zeta + \rho \right)d}\exp\left(-(2c_0^2\tau_\mu-\log(2))(k-1)d\right)
	\end{align*}
	for $\mu + 1 \leq k \leq K$, where $\zeta = \log(1+2^{-\mu-1}) + \log(2)+ \frac{\log(2)}{d} + \tfrac{\log(b_d)}{d}$.
Using $\mathds{P}(E) = 1 - \mathds{P}(E^c) \geq 1 - \sum_{k=\mu}^K \mathds{P}(E_k^c)$ it follows that
	\begin{align*}
		& \mathds{P}(E) \geq 1 -\sqrt{\frac{2}{\pi d}} \left( e^{-(2c_0^2-\rho-\mu+\sigma)d} \right.\\
		& \qquad \left. +  e^{\left(1 + \zeta + \rho \right)d} e^{\left(-(2c_0^2\tau_\mu-\log(2))\mu d\right)} \sum_{j=0}^{K-\mu-1} e^{\left(-(2c_0^2\tau_\mu-\log(2))j d\right)} \right)\\
		& \geq 1 - \sqrt{\frac{2}{\pi d}} e^{-(2c_0^2-\rho-\mu+\sigma)d} \left( 1 + \frac{e^{-(2c_0^2(\mu\tau_\mu-1)+(1-\log(2))\mu-1-\zeta-\sigma)d}}{1 - e^{\left(-(2c_0^2\tau_\mu-\log(2)) \right) d} } \right)\,.
\end{align*}		
 If $$c_0 \geq \sqrt{(\mu+\rho-\sigma)/2},$$ then we obtain		
	\begin{align*}
		 \mathds{P}(E) 
		 \geq 1 - \sqrt{\frac{2}{\pi d}} e^{-(2c_0^2-\rho-\mu+\sigma)d} \left( 1 + \frac{e^{-((\mu+\rho-\sigma)(\mu\tau_\mu-1)+(1-\log(2))\mu-1-\zeta-\sigma)d}}{1 - e^{\left(-((\mu+\rho-\sigma)\tau_\mu-\log(2)) \right) d} } \right).
	\end{align*}
	We find that 
	\begin{align} \label{eq2}
	1 + \frac{e^{-((\mu+\rho-\sigma)(\mu\tau_\mu-1)+(1-\log(2))\mu-1-\zeta-\sigma)d}}{1 - e^{\left(-((\mu+\rho-\sigma)\tau_\mu-\log(2)) \right) d} } < \sqrt{\frac{\pi d}{2}}
	\end{align}
	holds for $\mu= 12$ and $\tau_\mu = 0.0871$. This can be checked for $d=2$ and $\rho = 0$ by a computer calculation. As the left hand side of \eqref{eq2} is monotonic decreasing in $d$ and in $\rho$ and the right hand side is monotonic increasing in $d$, the general case follows. By inserting these explicit values into the formulas for $c_\mu$ and $c_1$ and relate $c$ and $c_0$ via
	$$ c = c_0 \left(1 +c_1 c_\mu \sqrt{ \tfrac{\mu}{2^\mu}} \right),$$
	we obtain the explicit lower bound for the probability. The minimal possible value for $c$ is taken if $c_0 = \sqrt{(\mu-\sigma)/2)}$ and hence estimate \eqref{est:disc*} holds only if $c > 2.4968$. Estimate \eqref{ineq:star_discrepancy} follows by putting $q:= 1 - e^{-(1.67681c^2-10.45292-\rho) \cdot d}$ and solving for $c$.
\end{proof}
\begin{remark} Note that also the probability $1 - e^{-(1.67681c^2-10.45292-\rho) \cdot d}$ is positive if $c > 2.4968$.
\end{remark}

In the special case of Monte Carlo points $X_1, X_2, \ldots, X_N$ we have $\rho = 0$ and our bound~\eqref{ineq:star_discrepancy} improves the result from \cite{AH14}, Theorem~1, and \cite{GH16}, Theorem~4.4. Furthermore, the smaller constant in \eqref{ineq:star_discrepancy2} (in comparison to $2.5287$ from \cite{GH16}, Corollary 4.5) can immediately be deduced from Theorem~\ref{thm:improved_star_discrepancy_bound}. Corollary~\ref{cor:improvedC} improves on the result from \cite{AH14}, table at the bottom of p.~1374, and \cite{GH16}, Corollary~4.4.

\begin{corollary}  \label{cor:improvedC} 
Let $d, N \in \mathbb{N}$.  Then there exists a point set $P \subset [0,1)^d$ of $X$ such that
\begin{align} \label{ineq:star_discrepancy2}	
	D_N^*(P) \leq 2.4968 \sqrt{\frac{d}{N}}.
\end{align}
Let $d\ge 2$ and let $X = (X_n)_{n=1}^N$ be a Monte Carlo point set uniformly distributed in $[0,1)^d$.The probability that a realization $P$ of $X$ satisfies
	$$D_N^*(P) \leq 2.5 \sqrt{\frac{d}{N}}, \qquad and \qquad D_N^*(P) \leq 3 \sqrt{\frac{d}{N}}$$
is at least $0.0528$ and $0.9999$, respectively. 
\end{corollary}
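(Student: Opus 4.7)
The plan is to deduce Corollary~\ref{cor:improvedC} essentially as a numerical corollary of Theorem~\ref{thm:improved_star_discrepancy_bound} by plugging in the prescribed values of $c$, $\rho$, and $d$. The key observation is that a classical Monte Carlo point set $(X_j)_{j=1}^N$ with i.i.d.\ uniform components on $[0,1)^d$ is $\mathcal{D}_0^d$-$1$-negatively dependent (independence gives exact factorisation of the joint probabilities, so the inequality holds with $\gamma=1$), hence Theorem~\ref{thm:improved_star_discrepancy_bound} applies with $\rho=0$.

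For the deterministic existence statement \eqref{ineq:star_discrepancy2}, I would first dispose of $d=1$ by taking the equidistant set $\{(i-1/2)/N \,:\, 1 \le i \le N\}$, whose star-discrepancy equals $1/(2N) \le 2.4968\,N^{-1/2}$ for every $N\ge 1$. For $d\ge 2$ I would apply Theorem~\ref{thm:improved_star_discrepancy_bound} with $\rho=0$: for any $c>2.4968$ the probability $1-e^{-(1.67681 c^2 - 10.45292)d}$ is strictly positive, so by the probabilistic method there exists a realisation $P_c$ with $D_N^*(P_c)\le c\sqrt{d/N}$. Taking $c_n\downarrow 2.4968$, the sequence $(P_{c_n})$ lies in the compact set $[0,1]^{dN}$, and a convergent subsequence together with the upper semi-continuity of $P\mapsto D_N^*(P)$ produces a limit point $P$ with $D_N^*(P)\le 2.4968\sqrt{d/N}$.

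For the two probabilistic statements I would simply substitute the two values of $c$ into the probability bound with $\rho=0$. For $c=2.5$:
\begin{equation*}
1.67681\cdot (2.5)^2 - 10.45292 \;=\; 10.48006 - 10.45292 \;=\; 0.02714,
\end{equation*}
so the probability is at least $1-e^{-0.02714\,d}$, which for $d\ge 2$ gives $1-e^{-0.05428}>0.0528$. For $c=3$:
\begin{equation*}
1.67681\cdot 9 - 10.45292 \;=\; 15.09129 - 10.45292 \;=\; 4.63837,
\end{equation*}
so the probability is at least $1-e^{-4.63837\,d} \ge 1-e^{-9.27674}>0.9999$ for $d\ge 2$.

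The only non-automatic step is the semi-continuity/compactness argument used to pass from $c>2.4968$ to equality in the existence statement; everything else is a direct substitution into the already-proven Theorem~\ref{thm:improved_star_discrepancy_bound}.
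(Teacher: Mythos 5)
Your proof matches the paper's in all essential respects: the paper also dispatches $d=1$ with the equidistant set $\{(2i-1)/(2N)\}$, also invokes Theorem~\ref{thm:improved_star_discrepancy_bound} with $\rho=0$ (Monte Carlo points are $\mathcal{D}_0^d$-$1$-negatively dependent), and also obtains the probabilities $0.0528$ and $0.9999$ by substituting $c=2.5$ and $c=3$ with $d=2$, just as you do, with identical arithmetic.

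The one structural difference is in how the existence bound \eqref{ineq:star_discrepancy2} is closed at $c=2.4968$. You introduce a compactness/limit argument over a sequence $c_n\downarrow 2.4968$. The paper instead simply plugs $c=2.4968$ directly into the probability bound $1-e^{-(1.67681c^2-10.45292)d}$ and notes that it is already strictly positive because $2.4968>\sqrt{10.45292/1.67681}=2.49676\ldots$; that is, the numerical threshold $2.4968$ stated in the theorem is itself strictly above the true break-even point coming out of the theorem's proof, so no limiting step is needed. Your compactness detour is therefore not necessary, though it is a legitimate alternative way to bridge the literal ``$c>2.4968$'' hypothesis of the theorem. Two small caveats if you keep it: (i) you want \emph{lower} semi-continuity of $P\mapsto D_N^*(P)$, not upper --- lower semi-continuity gives $D_N^*(P)\le\liminf_k D_N^*(P_{c_{n_k}})\le 2.4968\sqrt{d/N}$, whereas upper semi-continuity gives an inequality in the unhelpful direction; in fact $D_N^*$ is continuous, so either way it is fine, but you should name the right property; (ii) an even cheaper fix is continuity of probability from above applied to the nested events $\{D_N^*(X)\le c\sqrt{d/N}\}$ as $c\downarrow 2.4968$, avoiding any topology on point sets.
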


\begin{proof}
In the case $d = 1$ the set $P := \{1/2N, 3/2N,\ldots, (2N-1)/2N\}$ satisfies $D^*_N(P) = 1/2N$, which is even a stronger result than \eqref{ineq:star_discrepancy2}. The case $d\geq 2$ can be solved with Theorem \ref{thm:improved_star_discrepancy_bound}. We know that 
$$D_N^*(X) \leq c \sqrt{\frac{d}{N}}$$
holds with probability $q$ which is at least 
$$1 - e^{-(1.67681c^2-10.45292) \cdot d}, $$
and we want to show that the last term is strictly positive. 
Since the exponential is decreasing in $d$, it suffices to consider the case $d=2$. Solving the inequality for $c$ yields the first estimate, namely 
$$c > \sqrt{\tfrac{10.45292}{1.67681}} \geq 2.49676\ldots .$$
The remaining probabilities can be found if we use $c=2.5$ and $c=3$ respectively, i.e. 
$$q \geq 1 - e^{-(1.67681\cdot 2.5^2-10.45292) \cdot 2} \geq 0.0528$$
and
$$q \geq 1 - e^{-(1.67681\cdot 3^2-10.45292) \cdot 2} \geq 0.9999.$$
\end{proof}

In fact, Corollary~\ref{cor:improvedC} indicates that the realization of a Monte Carlo point set typically has star-discrepancy $D_N^*(P)  \leq c \sqrt{\frac{d}{N}}$. Next we show that this is not only the typical case in terms of probability but also in expectation. We split the corresponding statement into two separate parts.

\begin{lemma} \label{lem:expectation} 
Let $d, N \in \mathbb{N}$ and let $X = (X_n)_{n =1}^N$  be a sampling scheme in $[0,1]^d$. Then
	$$\mathds{E}[D_N^*(X)]  = \left( \int_{0}^{\sqrt{N/d}} \mathds{P}\left( D_N^*(X) \geq \tau \sqrt{d/N} \right) \mathrm{d} \tau \right) \sqrt{d/N}.$$
\end{lemma}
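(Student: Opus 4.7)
The plan is to recognize this as an essentially routine substitution applied to the layer-cake (tail integral) formula for the expectation of a non-negative random variable.

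First, I would recall that for any non-negative random variable $Y$ one has
$$\mathds{E}[Y] = \int_0^\infty \mathds{P}(Y \geq t)\, \mathrm{d}t,$$
which follows from Fubini's theorem applied to the indicator $\mathds{1}_{\{t \leq Y\}}$ on the product space. Since $D_N^*(X)$ measures a difference between a relative frequency (in $[0,1]$) and a Lebesgue measure (in $[0,1]$), it always takes values in $[0,1]$. In particular, $\mathds{P}(D_N^*(X) \geq t) = 0$ for every $t > 1$, so the tail integral may be truncated:
$$\mathds{E}[D_N^*(X)] = \int_0^1 \mathds{P}(D_N^*(X) \geq t)\, \mathrm{d}t.$$

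Next I would perform the change of variables $t = \tau \sqrt{d/N}$, so that $\mathrm{d}t = \sqrt{d/N}\, \mathrm{d}\tau$, while $t=0$ corresponds to $\tau = 0$ and $t=1$ corresponds to $\tau = \sqrt{N/d}$. Substituting yields
$$\mathds{E}[D_N^*(X)] = \int_0^{\sqrt{N/d}} \mathds{P}\!\left(D_N^*(X) \geq \tau \sqrt{d/N}\right) \sqrt{d/N}\, \mathrm{d}\tau,$$
and pulling the constant factor $\sqrt{d/N}$ out of the integral gives the claimed identity.

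There is no real obstacle here; the content of the lemma is simply that the tail-integral representation can be rewritten in the natural scale $\sqrt{d/N}$ that the later estimates use. The only point that deserves explicit mention is the observation that $D_N^*(X) \leq 1$ almost surely, which justifies restricting the tail integral to the bounded interval $[0, \sqrt{N/d}]$ after rescaling.
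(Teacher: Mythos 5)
Your proof is correct and matches the paper's argument exactly: the paper likewise invokes the tail-integral (layer-cake) formula $\mathds{E}[D_N^*(X)] = \int_0^1 \mathds{P}(D_N^*(X) \geq t)\,\mathrm{d}t$ as a consequence of Fubini's theorem and then applies the change of variables $t = \tau\sqrt{d/N}$. Your explicit mention of $D_N^*(X)\le 1$ almost surely is a nice touch that the paper leaves implicit.
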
  

\begin{proof} The equation 
	$$\mathds{E}[D_N^*(X)] = \int_0^1 \mathds{P}(D_N^*(X) \geq t) \mathrm{d} t$$
	is a standard result from probability theory and an immediate consequence of Fubini's Theorem. The result then follows from transformation of variables.
\end{proof}

\begin{proposition}\label{Prop:Expec_Disc}
Let $X= (X_n)_{n = 1}^N$ be a sampling scheme in $[0,1]^d$ and $\alpha, \beta \geq 0$ such that for all $c>0$
	$$\mathds{P}\left( D_N^*(X) \leq c \sqrt{\frac{d}{N}} \right) \geq 1 - e^{-(\alpha c^2 - \beta)d}.$$
Then 
	\begin{align*}
	\mathds{E}[D_N^*(X)] & \leq \sqrt{\frac{\beta}{\alpha}} \left( 1  + \sqrt{\frac{\pi}{\beta d}} e^{\beta d} \left( \Phi\left(\sqrt{2\alpha N}\right) - \Phi\left(\sqrt{2 \beta d}\right) \right)\right) \sqrt{\frac{d}{N}}\\
		& \leq \sqrt{\frac{\beta}{\alpha}} \left( 1 + \frac{1}{2 \beta d} \right) \sqrt{\frac{d}{N}},
	\end{align*}
	where $\Phi(\cdot)$ denotes the cumulative distribution function of the standard normal distribution.
\end{proposition}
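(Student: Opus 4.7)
The plan is to apply Lemma~\ref{lem:expectation} and then split the resulting integral at the threshold $\tau = \sqrt{\beta/\alpha}$, which is exactly the place where the hypothesised tail bound $e^{-(\alpha\tau^2-\beta)d}$ becomes nontrivial (i.e. strictly less than $1$). For $\tau \le \sqrt{\beta/\alpha}$ I would use the trivial bound $\mathds{P}(D_N^*(X)\ge\tau\sqrt{d/N})\le 1$, which contributes a term of size $\sqrt{\beta/\alpha}$. For $\tau > \sqrt{\beta/\alpha}$ the assumption (applied with $c=\tau$) gives $\mathds{P}(D_N^*(X)\ge \tau\sqrt{d/N}) \le e^{-(\alpha\tau^2-\beta)d}$.

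The next step is to compute the Gaussian integral that arises. Pulling out $e^{\beta d}$ and substituting $u=\tau\sqrt{2\alpha d}$ turns the remaining piece into
\begin{equation*}
e^{\beta d}\int_{\sqrt{\beta/\alpha}}^{\sqrt{N/d}} e^{-\alpha\tau^2 d}\,\mathrm{d}\tau
= e^{\beta d}\sqrt{\frac{\pi}{\alpha d}}\bigl(\Phi(\sqrt{2\alpha N})-\Phi(\sqrt{2\beta d})\bigr).
\end{equation*}
Factoring out $\sqrt{\beta/\alpha}$ from the sum of the two contributions and multiplying by the extra $\sqrt{d/N}$ from Lemma~\ref{lem:expectation} yields exactly the first claimed inequality.

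For the second, cruder bound the key is the classical Mills-type estimate
\begin{equation*}
1-\Phi(x) \le \frac{1}{x\sqrt{2\pi}}\,e^{-x^2/2}\qquad (x>0),
\end{equation*}
applied with $x=\sqrt{2\beta d}$. Since $\Phi(\sqrt{2\alpha N})-\Phi(\sqrt{2\beta d})\le 1-\Phi(\sqrt{2\beta d})$, this gives
\begin{equation*}
\sqrt{\frac{\pi}{\beta d}}\,e^{\beta d}\bigl(\Phi(\sqrt{2\alpha N})-\Phi(\sqrt{2\beta d})\bigr) \le \sqrt{\frac{\pi}{\beta d}}\,e^{\beta d}\cdot \frac{1}{\sqrt{2\beta d}\sqrt{2\pi}}\,e^{-\beta d} = \frac{1}{2\beta d},
\end{equation*}
and the second inequality follows immediately.

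I do not anticipate a real obstacle; the argument is essentially bookkeeping once the correct splitting point $\tau=\sqrt{\beta/\alpha}$ and the substitution converting the exponential integral into a Gaussian tail are identified. The only mild subtlety is making sure the upper endpoint $\sqrt{N/d}$ inherited from Lemma~\ref{lem:expectation} is preserved in the first, sharper bound (it appears as the argument $\sqrt{2\alpha N}$ of $\Phi$) and can be discarded when moving to the weaker bound via the Mills estimate.
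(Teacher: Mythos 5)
Your proof is correct and follows essentially the same approach as the paper: apply Lemma~\ref{lem:expectation}, split the integral at $\tau = \sqrt{\beta/\alpha}$, substitute to get a Gaussian tail, and bound that tail by the Mills-ratio estimate. The only superficial difference is that you cite the Mills inequality as classical, whereas the paper derives it inline by inserting the factor $1+1/x^2$ under the integral; the content is identical.
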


\begin{proof} Lemma~\ref{lem:expectation} implies
	$$\mathds{E}[D_N^*(X)] = \int_{0}^{\sqrt{N/d}} \mathds{P}\left( D_N^*(X) \geq \tau \sqrt{d/N} \right) \textrm{d} \tau \sqrt{d/N}.$$
By assumption it holds that 
	$$\mathds{P}\left( D_N^*(X) \geq \tau \sqrt{d/N} \right)  \leq \min \left( 1, e^{-(\alpha \tau^2 - \beta)d} \right)$$
and $e^{-(\alpha \tau^2 - \beta)d} < 1$ if $\tau > \sqrt{\beta/\alpha}$.
Thus
	\begin{align*}
	\mathds{E}[D_N^*(X)] < \left( \int_0^{\sqrt{\beta/\alpha}} \textrm{d} \tau + \int_{\sqrt{\beta/\alpha}}^{\sqrt{N/d}} e^{-(\alpha \tau^2 - \beta)d}\, \textrm{d}\tau \right)\sqrt{d/N}.
	\end{align*}
The first integral is equal to $\sqrt{\beta/\alpha}$. Applying the transformation $x^2 =2 \alpha \tau^2 d$, the second integral can be evaluated as follows:
\begin{align*}
	\int_{\sqrt{\beta/\alpha}}^{\sqrt{N/d}} e^{-(\alpha \tau^2 - \beta)d} \textrm{d}\tau & = e^{\beta d} 	\int_{\sqrt{\beta/\alpha}}^{\sqrt{N/d}} e^{-\alpha \tau^2 d} \mathrm{d}\tau \\
	& = \frac{e^{\beta d}}{\sqrt{2\alpha d}} \int_{\sqrt{2 \beta d}}^{\sqrt{2 \alpha N}} e^{-\frac{x^2}{2}} \mathrm{d} x\\
	& = \sqrt{\frac{\pi}{\alpha d}} e^{\beta d} \left( \Phi \left( \sqrt{2 \alpha N} \right) - \Phi \left( \sqrt{2 \beta d} \right) \right).
	\end{align*}
Because 
\begin{align*}
	\Phi \left( \sqrt{2 \alpha N} \right) - \Phi \left( \sqrt{2 \beta d} \right) & \leq \frac{1}{\sqrt{2\pi}} \int_{\sqrt{2 \beta d}}^\infty e^{-\frac{x^2}{2}} \,\mathrm{d} x 
	 \le \frac{1}{\sqrt{2\pi}} \int_{\sqrt{2 \beta d}}^\infty  \left( 1 + \frac{1}{x^2} \right) e^{-\frac{x^2}{2}} \,\mathrm{d} x\\
	&=  \frac{1}{\sqrt{2\pi}} \left( -\frac{1}{x} \right)  e^{-\frac{x^2}{2}} \bigg|^\infty_{\sqrt{2\beta d} }
	= \frac{1}{\sqrt{2\pi}} \cdot \frac{1}{\sqrt{2 \beta d}} e^{-\beta d}
\end{align*}
also the second inequality follows.
\end{proof}
In explicit numerical terms, we finally get the following inequalities for the expected value:

\begin{corollary} \label{cor:expectation}
	Let $d, N \in \mathbb{N}$ and let $X = (X_n)_{n = 1}^N$ be a Monte Carlo point set uniformly distributed in $[0,1)^d$. Then 
	$$\mathds{E}[D_N^*(X)] \leq 2.55648 \sqrt{\frac{d}{N}}$$
	for $d = 2$ and
	$$\mathds{E}[D_N^*(X)] \leq 2.53657 \sqrt{\frac{d}{N}}$$
	for all $d \geq 3$. 	
\end{corollary}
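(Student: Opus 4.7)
The plan is to combine Theorem~\ref{thm:improved_star_discrepancy_bound} (in the specialisation $\rho=0$) with Proposition~\ref{Prop:Expec_Disc}. Independent uniform Monte Carlo points form a $\mathcal{D}_0^d$-$1$-negatively dependent sampling scheme (they satisfy the inequality with equality and $\gamma=1=e^{0\cdot d}$), so Theorem~\ref{thm:improved_star_discrepancy_bound} applies with $\rho=0$ and yields, for every $c > 2.4968$,
\[
\mathds{P}\!\left( D_N^*(X) \le c\sqrt{d/N}\right) \ge 1 - e^{-(\alpha c^2 - \beta)d},
\qquad \alpha := 1.67681,\ \beta := 10.45292.
\]
This is exactly the hypothesis of Proposition~\ref{Prop:Expec_Disc}, so I can plug in the values directly.

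Applying Proposition~\ref{Prop:Expec_Disc} with these values of $\alpha,\beta$ gives
\[
\mathds{E}[D_N^*(X)] \le \sqrt{\tfrac{\beta}{\alpha}}\,\Bigl(1 + \tfrac{1}{2\beta d}\Bigr)\sqrt{\tfrac{d}{N}}.
\]
A short numerical check shows $\sqrt{\beta/\alpha} = \sqrt{10.45292/1.67681} \le 2.49677$, slightly below the constant $2.4968$ that appeared in Theorem~\ref{thm:improved_star_discrepancy_bound}. The correction factor $1 + \frac{1}{2\beta d}$ is strictly decreasing in $d$, so the worst case within each of the two ranges of interest is attained at the smallest admissible $d$.

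For $d=2$ I evaluate
\[
2.49677\cdot\Bigl(1 + \tfrac{1}{2\cdot 10.45292\cdot 2}\Bigr) = 2.49677\cdot (1 + 0.02392\ldots) \le 2.55648,
\]
which is the first stated bound. For $d\ge 3$, by monotonicity in $d$ it suffices to evaluate at $d=3$:
\[
2.49677\cdot\Bigl(1 + \tfrac{1}{6\cdot 10.45292}\Bigr) = 2.49677\cdot(1 + 0.01594\ldots) \le 2.53657,
\]
which is the second bound. There is no real obstacle here beyond verifying these two arithmetic estimates; the only structural point to highlight is that Monte Carlo sampling falls into the $\rho=0$ case of Theorem~\ref{thm:improved_star_discrepancy_bound}, and that the two regimes $d=2$ and $d\ge 3$ in the statement reflect precisely the monotonicity of the factor $1+\frac{1}{2\beta d}$ from Proposition~\ref{Prop:Expec_Disc}.
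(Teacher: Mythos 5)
Your proposal is correct and follows exactly the route the paper intends (the paper states Corollary~\ref{cor:expectation} without an explicit proof but flags it in the introduction as an immediate consequence of Proposition~\ref{Prop:Expec_Disc}, and Remark~3.7 confirms the value $\sqrt{\beta/\alpha}=2.49676\ldots$): Monte Carlo points are $\mathcal{D}_0^d$-$1$-negatively dependent, so Theorem~\ref{thm:improved_star_discrepancy_bound} with $\rho=0$ supplies $\alpha=1.67681$, $\beta=10.45292$, and plugging these into the second bound of Proposition~\ref{Prop:Expec_Disc} with $d=2$ and $d=3$, together with the monotone decrease of $1+\frac{1}{2\beta d}$ in $d$, yields the two stated constants. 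The arithmetic checks out.
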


\begin{remark} Notice that $\sqrt{\beta/\alpha} = 2.49676...$ and thus the quantity $c_d$ in the inequality  $\mathds{E}[D_N^*(X)] \leq c_d \sqrt{d/N}$ in Proposition~\ref{Prop:Expec_Disc} converges to the same constant $c$ as in Corollary~\ref{cor:improvedC} when $d$ goes to $\infty$.
\end{remark}

From Corollary~\ref{cor:expectation}, Theorem~\ref{thm:improved_star_discrepancy_bound} and
from the estimates \eqref{doe1} and \eqref{doe2} established in \cite{Doe13} we immediately obtain the following corollary.

\begin{corollary}\label{Cor_Concentration}
Let $d, N \in \mathbb{N}$ and let $X = (X_n)_{n = 1}^N$ be a Monte Carlo point set uniformly distributed in $[0,1)^d$. Then there exist constants $K_1, K_2 >0$, independent of $d$ and $N$,  such that 
$$K_1 \sqrt{\frac{d}{N}} \le \mathds{E}[D_N^*(X)] \leq K_2 \sqrt{\frac{d}{N}}$$
and the estimate
$$K_1 \sqrt{\frac{d}{N}} \le D^*_N(X) \leq K_2 \sqrt{\frac{d}{N}}$$
holds with probability at least $1-2e^{-\Theta(d)}$. 
\end{corollary}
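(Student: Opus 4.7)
The plan is to prove the corollary by simply combining the four relevant bounds: the two upper bounds established in this paper together with the two lower bounds from Doerr \cite{Doe13}.

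For the expectation, the upper bound $\mathds{E}[D_N^*(X)] \leq K_2 \sqrt{d/N}$ follows directly from Corollary~\ref{cor:expectation}: taking $K_2 := 2.55648$ covers all $d \geq 2$ uniformly, and the case $d=1$ is handled by the trivial bound $D_N^*(P) \leq 1/(2N)$ from the proof of Corollary~\ref{cor:improvedC}, possibly after enlarging $K_2$ by an absolute constant. The matching lower bound $\mathds{E}[D_N^*(X)] \geq K_1 \sqrt{d/N}$ for some absolute $K_1 > 0$ is precisely estimate~\eqref{doe1}, which is Theorem~1 of \cite{Doe13}.

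For the probabilistic two-sided bound, a Monte Carlo sample is by independence $\mathcal{D}_0^d$-$\gamma$-negatively dependent with $\gamma=1$, so Theorem~\ref{thm:improved_star_discrepancy_bound} with $\rho=0$ yields
\[
\mathds{P}\!\left(D_N^*(X) \leq K_2 \sqrt{\tfrac{d}{N}}\right) \geq 1 - e^{-(1.67681\,K_2^2 - 10.45292)\cdot d},
\]
and since one may choose $K_2$ large enough that $1.67681\,K_2^2 - 10.45292 > 0$ (the value above suffices), this is at least $1 - e^{-\Theta(d)}$. Symmetrically, \eqref{doe2} furnishes
\[
\mathds{P}\!\left(D_N^*(X) < K_1 \sqrt{\tfrac{d}{N}}\right) \leq e^{-\Theta(d)}
\]
for the constant $K_1$ provided by Doerr. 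A union bound combines these into
\[
\mathds{P}\!\left(K_1 \sqrt{\tfrac{d}{N}} \leq D_N^*(X) \leq K_2 \sqrt{\tfrac{d}{N}}\right) \geq 1 - 2 e^{-\Theta(d)},
\]
which is the desired probabilistic estimate.

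There is essentially no obstacle beyond bookkeeping: one only needs to verify that a single pair of constants $(K_1,K_2)$ works simultaneously for the expectation and the probabilistic bound. This is immediate because the same $K_1$ from \eqref{doe1}--\eqref{doe2} controls both statements on the lower side, and the upper bound from Theorem~\ref{thm:improved_star_discrepancy_bound} controls the high-probability side while Corollary~\ref{cor:expectation} controls the expectation on the upper side; taking $K_2$ to be the maximum of the two upper constants yields a uniform choice.
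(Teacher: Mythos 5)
Your proposal follows exactly the paper's route: combine the upper bounds from Corollary~\ref{cor:expectation} and Theorem~\ref{thm:improved_star_discrepancy_bound} with the lower bounds \eqref{doe1}--\eqref{doe2} from Doerr, using a union bound for the two-sided probabilistic estimate and taking the maximum of the two upper-bound constants; the paper states this is ``immediate'' without spelling out the details, and you have supplied them correctly.

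One small inaccuracy: to cover $d=1$ you invoke the bound $D_N^*(P)\le 1/(2N)$ from the proof of Corollary~\ref{cor:improvedC}, but that bound is for the \emph{deterministic} equidistributed set $\{1/2N,\ldots,(2N-1)/2N\}$, not for a Monte Carlo sample. For a random $N$-point set in $[0,1)$ the expected star-discrepancy and its high-probability value are of order $1/\sqrt{N}$, not $1/N$; the correct justification is a Dvoretzky--Kiefer--Wolfowitz-type concentration bound (or a direct one-dimensional computation), which still yields a constant upper bound against $\sqrt{d/N}=1/\sqrt{N}$, so the conclusion stands once $K_2$ is enlarged appropriately. This does not change the structure of the argument.
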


\begin{remark}
For  $d\ge 2$  the result of Corollary~\ref{Cor_Concentration} holds also for Latin hypercube samples instead of Monte Carlo samples. This follows from Proposition~\ref{Prop:Expec_Disc} and the results proved in \cite{GH16} and \cite{DDG18}.
\end{remark}

\begin{remark}\label{Rem:Matousek}
Theorem~\ref{thm:improved_star_discrepancy_bound}, Proposition~\ref{Prop:Expec_Disc},  and Corollary~\ref{Cor_Concentration} show that the discrepancy of an $N$-point sampling scheme (consisting of i.i.d. or negatively dependent points) in $[0,1]^d$ has typically (on average and with very high probability) a discrepancy of order at most $\sqrt{d/N}$. Now in practice it is desirable to have sampling schemes that are extensible in the number of points $N$ or even in the dimension $d$. The results presented in this paper do not guarantee that the proved discrepancy bounds still hold with high probability if we keep adding random points to our sampling scheme or additional random components to our points to obtain higher-dimensional point sets. 
Results that take care about these issues were first presented by Dick \cite{Dic07}, who proved probabilistic discrepancy bounds for infinite-dimensional infinite sequences of 
independent and uniformly distributed random points; these results were slightly improved in \cite{DGKP08}. Later, Aistleitner \cite{Ais13} and Aistleitner and Weimar \cite{AW13} substantially improved these bounds and extended them. To provide a specific example, it was proved in \cite{AW13} that with positive probability a random matrix $X\in [0,1]^{\N\times \N}$ satisfies for all $d\in \N$ and all $N \ge 2$ the discrepancy bound 
\begin{equation}\label{est:aw13}
D^*_N(P_{N,d}) \le \sqrt{ 2130 + 308 \frac{\log \log (N)}{d}} \, \sqrt{\frac{d}{N}}\,,
\end{equation} 
where $P_{N,d}\subset ([0,1]^d)^N$ denotes the projection of $X$ onto its first $d$ rows and first $N$ columns. Note that compared to \eqref{est:disc*}, 
 an additional term $\sqrt{\log \log (N)/d}$ appears in \eqref{est:aw13}. This cannot be avoided: although the typical discrepancy of random points is $\sqrt{d/N}$, the law of the iterated logarithm states that for fixed $d$ we have
$$
\limsup_{N\to \infty}\frac{\sqrt{N}\,D^*_N (P_{n,d})}{\log\log(N)} \ge \frac{1}{\sqrt{2}}\,.
$$
Note also that the constants in \eqref{est:aw13} are rather large.
We are convinced that we are able to improve the constants in the bounds from \cite{AW13} (and probably also from \cite{Ais13}) reasonably by using our improved bounds for $\delta$-covers from Theorem~\ref{thm:general_bracketing_number} 
and some of the tweaks used in the proofs in \cite{GH16}.  But since this would require additional effort and this paper is already rather long, we leave this as a potential topic of future research (cf. also Section~\ref{SEC:Future_Research}).
\end{remark}

Since proving that a given sampling scheme is $\mathcal{D}_0^d$-$e^{\rho d}$-negatively dependent is in some cases a difficult task, we now consider the relaxed condition of $\mathcal{C}_0^d$-$e^{\rho d}$-negative dependence. With the help of Theorem~\ref{thm:general_bracketing_number}, we can also improve the existing bounds in that case (\cite{WGH19}, Theorem~3.1).

\begin{corollary} Let $d,N \in \mathbb{N}$ and $\rho \in [0,\infty)$. Let $X = (X_j)_{j=1}^N$ be a $\mathcal{C}_0^d$-$e^{\rho d}$-negatively dependent sampling scheme in $[0,1)^d$. Then for every $c > 0$
\begin{align} \label{eq5}
	D_N^*(X) \leq c \sqrt{\frac{d}{N} \max \left( 1 , \log \left( \frac{N}{d} \right) \right)	 }
\end{align}
holds with probability at least $1 - 2 \exp\left( (-\tfrac{1}{2}(c^2-1) \xi + \rho + \log(e(\tfrac{2}{c}+1)) + \tfrac{\log(b_d)}{d})d \right)$, where $\xi := \max \left( 1 , \log \left( \tfrac{N}{d} \right) \right)$ 
and $b_d := \max(1, 1.1^{d-101})$. 
Moreover, for every $\theta \in (0,1)$
\begin{align} \label{eq6}
	\mathds{P}\left( D_N^*(X) \leq \sqrt{\frac{2}{N}} \sqrt{d \log(\eta) + \rho d + \log \left( \frac{2}{1-\theta} \right)} \right) \geq \theta,
\end{align} 
where $$\eta:=\eta(N,d) = 3.3e \left( \max \left(1,\frac{N}{2d\log(3.3e)}\right) \right)^{\frac{1}{2}}.$$
\end{corollary}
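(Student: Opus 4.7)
The overall strategy follows the standard template for turning a bracketing/$\delta$-cover bound into a probabilistic star-discrepancy bound: pick an appropriately sized $\delta$-cover $\Gamma$ of $[0,1]^d$, use Lemma~\ref{Lemma:3.1} to reduce $D_N^*(X)$ to the maximum of $D_N(X,[0,x))$ over $x\in\Gamma$, control each individual $D_N(X,[0,x))$ by a Hoeffding-type inequality that holds under $\mathcal{C}_0^d$-$e^{\rho d}$-negative dependence, and take a union bound. The novelty here is only that we feed our improved bracketing bound from Theorem~\ref{thm:general_bracketing_number} into this machine, which replaces the factor $2^{d-1}$ (or $2^{d-2}$) by $b_d = \max(1,1.1^{d-101})$.

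Concretely, the plan is to set $\delta := \tfrac{c}{2}\sqrt{d\xi/N}$ and to choose $\Gamma$ as a $\delta$-cover of $[0,1]^d$ of minimum cardinality. Combining the relation $N(A,\delta)\le 2N_{[\ ]}(A,\delta)$ with Theorem~\ref{thm:general_bracketing_number} and Stirling's formula $d!\ge \sqrt{2\pi d}(d/e)^d$ yields
$$|\Gamma|\le 2 b_d\,\frac{e^d}{\sqrt{2\pi d}}\,(\delta^{-1}+1)^d.$$
For each fixed $x\in\Gamma$ the indicators $(\mathds{1}_{[0,x)}(X_j))_{j=1}^N$ are $e^{\rho d}$-negatively dependent, so the corresponding Hoeffding bound gives
$$\mathds{P}\bigl(|D_N(X,[0,x))|\ge t\bigr)\le 2e^{\rho d}\exp(-2Nt^2).$$
Setting $t:=\tfrac{c}{2}\sqrt{d\xi/N}$ (so that $t+\delta = c\sqrt{d\xi/N}$) and applying Lemma~\ref{Lemma:3.1} together with the union bound leads to
$$\mathds{P}\bigl(D_N^*(X)\ge c\sqrt{d\xi/N}\bigr)\le \frac{4b_d}{\sqrt{2\pi d}}(e(\delta^{-1}+1))^d e^{\rho d}\exp\!\bigl(-\tfrac{c^2}{2}\xi d\bigr).$$

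The key remaining step is to bound $(\delta^{-1}+1)^d$. With $\delta^{-1}=\tfrac{2}{c}\sqrt{N/(d\xi)}$ one has $(\delta^{-1}+1)^d\le (2/c+1)^d \max\{1,(N/(d\xi))^{d/2}\}$, and a short case analysis (separating $N\le d$ from $N>d$, using $N/(d\xi)=e^\xi/\xi\le e^\xi$ in the latter case) shows that this maximum is at most $e^{d\xi/2}$. Plugging in, the $e^{d\xi/2}$ factor combines with $\exp(-\tfrac{c^2}{2}\xi d)$ to produce the desired $\exp(-\tfrac{1}{2}(c^2-1)\xi d)$, and using $4/\sqrt{2\pi d}\le 2$ for $d\ge 1$ collapses the prefactor to $2$; this gives \eqref{eq5}. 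Estimate \eqref{eq6} then follows by setting the failure probability in \eqref{eq5} equal to $1-\theta$ and solving for $c\sqrt{d\xi/N}$: the resulting expression naturally splits as $\sqrt{(2/N)(d\log\eta+\rho d+\log(2/(1-\theta)))}$, with $\log\eta$ absorbing the terms $\xi/2$, $\log(e(2/c+1))$ and $\log(b_d)/d$; the two cases $N\le 2d\log(3.3e)$ and $N>2d\log(3.3e)$ inside the $\max$ defining $\eta$ correspond exactly to the two regimes $\xi=1$ and $\xi=\log(N/d)$.

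I expect the only real obstacle to be the bookkeeping in the last step: one has to verify that, with $c$ free, the implicit inequality $c^2\ge 1+\tfrac{2}{\xi}[\rho+\log(e(2/c+1))+\log b_d/d+\log(2/(1-\theta))/d]$ coming from \eqref{eq5} is implied by the explicit choice $c\sqrt{d\xi/N}=\sqrt{(2/N)(d\log\eta+\rho d+\log(2/(1-\theta)))}$ with the stated $\eta(N,d)$, uniformly in $c>0$. This amounts to checking the inequality $\tfrac{\xi}{2}+\log(e(2/c+1))+\log(b_d)/d\le \log\eta$ in the two regimes; the constant $3.3e$ in $\eta$ is exactly what makes this go through without any extra dependence on $c$.
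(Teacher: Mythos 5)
Your derivation of \eqref{eq5} is essentially the paper's: plug $|\Gamma|\le b_d\,e^d(\delta^{-1}+1)^d$ (Theorem~\ref{thm:general_bracketing_number} plus Stirling) into the intermediate inequality of \cite{WGH19}, namely $\mathds{P}(D_N^*(X)<2\delta)\ge 1-2e^{\rho d}|\Gamma|e^{-2N\delta^2}$, with $2\delta=c\sqrt{d\xi/N}$, and absorb $(\delta^{-1}+1)^d\le(2/c+1)^d e^{\xi d/2}$ using $N/(d\xi)\le e^\xi$. That half is sound and matches the paper.

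The gap is in your route to \eqref{eq6}. The paper obtains \eqref{eq6} \emph{directly} from the intermediate inequality by setting $2\delta$ equal to the right-hand side of \eqref{eq6}; this only requires $|\Gamma|\le\eta^d$, which after substituting the cover bound reduces to a condition of the type $\bigl(\eta/(b_d^{1/d}e)-1\bigr)\sqrt{\log\eta}>\sqrt{2N/d}$, and the constant $3.3e$ in $\eta$ is chosen exactly to make that hold. You instead try to obtain \eqref{eq6} as a corollary of \eqref{eq5}, which leads (as you note) to the requirement $\log\eta\ge \xi/2+1+\log(2/c+1)+\log(b_d)/d$ with $c$ determined by $c^2\xi=2\log\eta+2\rho+\tfrac{2}{d}\log\tfrac{2}{1-\theta}$. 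This is strictly more demanding than the paper's condition, because in proving \eqref{eq5} you already replaced $N/(d\xi)=e^\xi/\xi$ by the cruder $e^\xi$, throwing away a factor $\xi^{d/2}$ in the $|\Gamma|$-estimate, and the constant $3.3e$ in $\eta$ does not compensate for that loss. A concrete failure: take $d=200$, $N=10^6$, $\rho=0$ and $\theta$ small. Then $\xi=\log(5000)\approx 8.52$, $\eta\approx 303$ so $\log\eta\approx 5.71$, the induced $c\approx 1.16$, and the right-hand side of your required inequality is about $4.26+1+1.00+0.05=6.31>5.71$, while the right-hand side of \eqref{eq6} itself is about $0.048$, so the regime is nontrivial. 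Hence the second half of the argument cannot be salvaged as a consequence of \eqref{eq5}; one has to go back to the intermediate inequality and argue as in the paper (or as in \cite{WGH19}) directly.
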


\begin{proof} 
The proof relies on an intermediate result of Theorem~3.1 in \cite{WGH19}, namely the inequality
\begin{equation}\label{intermediate_result}
\mathds{P}(D_N^*(X) < 2\delta) \geq 1 - 2e^{\rho d} |\Gamma| e^{-2N\delta^2}
\end{equation}
that holds for all $\delta >0$; here $\Gamma$ is the cardinality of an arbitrary $\delta$-cover.
Due to Theorem \ref{thm:general_bracketing_number} and Sterling's formula we have
$$|\Gamma| \le N(d, \delta) \le 2N_{[\ ]}(d, \delta) < \max\left({1.1}^{d-101},1\right) e^d (\delta^{-1} + 1)^d,$$
since 
$$2\frac{d^d}{d!} \leq 2\frac{e^d}{\sqrt{2\pi d}} < e^d $$
for all $d\in \mathbb{N}$. Now, we deduce \eqref{eq6} from \eqref{intermediate_result} and follow exactly the lines of the proof of Theorem~3.1 in \cite{WGH19}; the only exception is that we 
end up with the condition
$$\left( \frac{\eta}{b_d e} - 1 \right) \log(\eta)^{\frac{1}{2}} > \left( \frac{2N}{d} \right)^{\frac{1}{2}},$$
which is fulfilled by the $\eta$ in the statement of the result. Then, we apply \eqref{intermediate_result} with $2\delta = c \sqrt{\frac{d}{N}\xi}$ to prove \eqref{eq5}.
\end{proof}

\subsection{Bounds for the Weighted Star-Discrepancy}\label{SUBSEC:Weighted_Star_Discrepancy}

Finally, we also treat the case of the weighted star-discrepancy and improve the quantitative bounds from \cite{WGH19}, Theorem~3.4, and \cite{Ais14}, Theorem~1. For that purpose, we shortly recall the definition of weighted star-discrepancy as introduced in \cite{SW98}: The weights are a collection of non-negative numbers $\gamma = (\gamma_u)_{\emptyset \neq u \subseteq [d]}$, where $\gamma_u$ is interpreted as the weight of the coordinates from the subset $u$. Moreover, recall from our presentation of the Koksma-Hlawka inequality that $(x_u,1)$ denotes the point in $[0,1]^d$ whose $i$-th component is $x_i$ if $i \in u$ and $1$ otherwise. The \textbf{weighted star-discrepancy} of a $N$-point set $P$ and weights $\gamma$ is defined by
$$D^*_{N,\gamma}(P) := \sup_{z \in [0,1]^d} \max_{\emptyset \neq u \subseteq [d]} \gamma_u |D_N(P,(z_u,1))|.$$
More information about the weighted star discrepancy can, e.g., be found in \cite{Ais14, HPS08}. The following proposition leads to an improvement of \cite{Ais14}, Theorem~1, see Corollary~\ref{Ais_weighted_Discrepancy}.

\begin{proposition}\label{Prop:Pre_Aistleitner}
Let $d,N \in \mathbb{N}$ and $\alpha, \beta \in [0,\infty)$.
Let $X = (X_j)_{j=1}^N \subset [0,1)^d$ be a family of random points
such that for every $\emptyset \neq u \subseteq [d]$ its projection $X^u$ onto the coordinates in $u$ satisfies
 for every $q \in (0,1)$
	\begin{align}\label{cond:weighted_star} 
	D_N^*(X^u) \leq \alpha \sqrt{\beta + \frac{\log((1-q)^{-1})}{|u|}} \sqrt{\frac{|u|}{N}}
	\end{align}
	with probability at least $q$. Then there exists a realization $P\subset [0,1]^d$ of $X$ such that
\begin{align}\label{improved_aistleitner} 
	D_{N,\gamma}^*(X) \leq \frac{\alpha}{\sqrt{N}} 
	 \max_{\emptyset \neq u \subseteq [d]} \gamma_u  \sqrt{\beta + 1+ \log \left( 1 + \frac{1}{\sqrt{2\pi}} \right) + \log(d) - \left( 1+ \frac{1}{2|u|} \right) \log(|u|) }\,
	\sqrt{|u|}.
\end{align}
\end{proposition}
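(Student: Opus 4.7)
The plan is to extract, for each non-empty $u\subseteq[d]$, an event of controlled failure probability from the hypothesis \eqref{cond:weighted_star} and to close the argument by a union bound over all $2^d-1$ such subsets. Since the box $[0,(z_u,1))$ restricts only the coordinates in $u$, one has $D_N(P,(z_u,1))=D_N(P^u,[0,z_u))$, and consequently
\[
D_{N,\gamma}^*(P) \;=\; \max_{\emptyset\neq u\subseteq[d]}\gamma_u\,D_N^*(P^u).
\]
Therefore any realization of $X$ that simultaneously satisfies the desired discrepancy bound for every projection $X^u$ automatically satisfies \eqref{improved_aistleitner}.

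Next, I would choose the failure probabilities so that the resulting per-$u$ bound reproduces the term appearing in \eqref{improved_aistleitner}. Setting $1-q_u =: e^{-f(|u|)}$ with
\[
f(k) \;:=\; k\bigl(1+\log(1+1/\sqrt{2\pi})+\log d\bigr) \;-\; \bigl(k+\tfrac12\bigr)\log k,
\]
the hypothesis \eqref{cond:weighted_star} guarantees that with probability at least $1-e^{-f(|u|)}$ the projection $X^u$ of cardinality $k=|u|$ fulfils
\[
D_N^*(X^u) \;\le\; \alpha\sqrt{\beta+f(k)/k}\,\sqrt{k/N} \;=\; \frac{\alpha}{\sqrt{N}}\sqrt{\beta+1+\log(1+1/\sqrt{2\pi})+\log d-\bigl(1+\tfrac{1}{2k}\bigr)\log k}\,\sqrt{k},
\]
which is precisely the quantity multiplying $\gamma_u$ in \eqref{improved_aistleitner}.

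The main technical step is to verify that the union bound still leaves positive probability for all these events to occur simultaneously, i.e.\ that
\[
S \;:=\; \sum_{\emptyset\neq u\subseteq[d]}(1-q_u) \;=\; \sum_{k=1}^d \binom{d}{k}\,e^{-f(k)} \;<\; 1.
\]
Combining $\binom{d}{k}\le d^k/k!$ with Stirling's lower bound $k!\ge\sqrt{2\pi k}(k/e)^k$, a direct computation collapses each term to
\[
\binom{d}{k}\,e^{-f(k)} \;\le\; \frac{1}{\sqrt{2\pi}}\,\bigl(1+1/\sqrt{2\pi}\bigr)^{-k}.
\]
Summing the geometric series then yields $S\le \frac{1}{\sqrt{2\pi}}\sum_{k\ge1}(1+1/\sqrt{2\pi})^{-k}=1$, with strict inequality because the tail $k>d$ contributes a positive amount. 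Hence $X$ lies in the intersection of all the good events with positive probability, so some realization $P$ of $X$ fulfils every per-$u$ bound and therefore \eqref{improved_aistleitner}. The only delicate point is the choice of $f$, which is engineered so that Stirling reduces the binomial sum to a geometric series of ratio $(1+1/\sqrt{2\pi})^{-1}$ with total mass exactly $1$.
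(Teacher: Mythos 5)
Your proof is correct and follows essentially the same route as the paper: the identity $D_{N,\gamma}^*(P)=\max_{\emptyset\neq u}\gamma_u D_N^*(P^u)$, the choice $1-q_u=e^{-f(|u|)}$ with the exact same $f$ (the paper writes it as $|u|(\log(ed/|u|)+\log(1+1/\sqrt{2\pi}))-\tfrac12\log|u|$, which expands to your expression), the union bound, the estimate $\binom{d}{k}\le \frac{1}{\sqrt{2\pi k}}(ed/k)^k$ via Stirling, and the collapse of each term to $\frac{1}{\sqrt{2\pi}}(1+1/\sqrt{2\pi})^{-k}$ summing to a geometric series with limit $1$. No gap.
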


\begin{proof}
For $\emptyset \neq u \subseteq d$ we put
$$q := 1 - e^{-|u| \left( \log \left( \frac{ed}{|u|} \right) + \log \left( 1+\frac{1}{\sqrt{2\pi}} \right) \right) + \frac{1}{2} \log(|u|)}$$
and
$$A_u := \left\{ D_N^*(X^u) > \alpha \sqrt{ \beta + \frac{\log \left( (1-q)^{-1} \right)}{|u|}}
\sqrt{\frac{|u|}{N}} \right\}.$$
To prove Proposition~\ref{Prop:Pre_Aistleitner} it suffices to show that 
$P \left(\cup_{\emptyset \neq u \subseteq [d]} A_u \right) < 1$.
Indeed, using
$$ {d\choose r} \le \frac{1}{\sqrt{2\pi r}} \left( \frac{ed}{r} \right)^r 
\hspace{3ex}\text{for every $1\le r\le d$,}$$
we obtain due to \eqref{cond:weighted_star} 
\begin{equation*}
\begin{split}
\mathds{P} \left(\bigcup_{\emptyset \neq u \subseteq [d]} A_u \right) &\le \sum_{r=1}^d \sum_{|u| = r} \mathds{P} (A_u) \le \sum_{r=1}^d  { d \choose r} e^{-r \left( \log \left( \frac{ed}{r} \right) + \log \left( 1+\frac{1}{\sqrt{2\pi}} \right) \right) + \frac{1}{2} \log(r)}\\
&\le \frac{1}{\sqrt{2\pi}} \sum_{r=1}^d e^{-r \log \left( 1+\frac{1}{\sqrt{2\pi}} \right) }
<  \frac{1}{\sqrt{2\pi}} \left( \frac{1}{1- \left( 1+\frac{1}{\sqrt{2\pi}} \right)^{-1}} -1 \right) = 1.
\end{split}
\end{equation*}
\end{proof}

We apply Proposition~\ref{Prop:Pre_Aistleitner} to Monte Carlo point sets. Then
Theorem~\ref{thm:improved_star_discrepancy_bound} with $\rho=0$ gives us immediately the following corollary.

\begin{corollary}\label{Ais_weighted_Discrepancy}
There exists a point set $P\subset [0,1)^d$ such that 
\begin{align}\label{improved_aistleitner2} 
	D_{N,\gamma}^*(X) \leq \frac{0.7723}{\sqrt{N}} 
	 \max_{\emptyset \neq u \subseteq [d]} \gamma_u  \sqrt{11.78864+ \log(d) - \left( 1+ \frac{1}{2|u|} \right) \log(|u|) }\,
	\sqrt{|u|}.
\end{align}
\end{corollary}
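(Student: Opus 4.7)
The plan is to obtain the corollary as an essentially immediate consequence of Proposition~\ref{Prop:Pre_Aistleitner} applied to the Monte Carlo sample $X$, using Theorem~\ref{thm:improved_star_discrepancy_bound} with $\rho=0$ to supply the probabilistic hypothesis~\eqref{cond:weighted_star} with the concrete constants $\alpha = 0.7723$ and $\beta = 10.45292$. For every nonempty $u \subseteq [d]$ the projection $X^u = (X_j^u)_{j=1}^N$ is itself an i.i.d.\ uniform Monte Carlo sample on $[0,1)^{|u|}$, and therefore trivially $\mathcal{D}_0^{|u|}$-negatively dependent with constant $1$. Whenever $|u|\ge 2$, Theorem~\ref{thm:improved_star_discrepancy_bound} applied in dimension $|u|$ immediately produces, for every $q \in (0,1)$,
\[
D_N^*(X^u) \;\le\; 0.7723\,\sqrt{10.45292 + \frac{\log((1-q)^{-1})}{|u|}}\,\sqrt{\frac{|u|}{N}}
\]
with probability at least $q$, which is exactly the form required by \eqref{cond:weighted_star}.

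The only step that is not completely formal is the case $|u|=1$, since Theorem~\ref{thm:improved_star_discrepancy_bound} is stated only for dimensions $d \ge 2$. For one-dimensional projections I would fall back on the Dvoretzky--Kiefer--Wolfowitz inequality $\mathds{P}(D_N^*(X^u) > t) \le 2e^{-2Nt^2}$. Substituting $t = 0.7723\,\sqrt{10.45292 + \log((1-q)^{-1})}\,N^{-1/2}$ and using $2 \cdot 0.7723^2 = 1.1928 \ge 1$, the required inequality $2e^{-2Nt^2} \le 1-q$ reduces to $2\exp(-2 \cdot 0.7723^2 \cdot 10.45292) \le (1-q)^{1 - 1.1928}$, whose right-hand side is $\ge 1$ for $q \in (0,1)$ while the left-hand side is bounded by $2e^{-12.468} < 10^{-5}$. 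Thus \eqref{cond:weighted_star} holds for every nonempty $u \subseteq [d]$.

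Once the hypothesis of Proposition~\ref{Prop:Pre_Aistleitner} is verified for all $u$, the conclusion \eqref{improved_aistleitner} yields a realization $P$ of $X$ satisfying the announced bound with numerical constant $\beta + 1 + \log(1 + 1/\sqrt{2\pi})$ inside the square root. A direct evaluation gives
\[
10.45292 + 1 + \log\!\Bigl(1 + \tfrac{1}{\sqrt{2\pi}}\Bigr) \;=\; 11.45292 + \log(1.39894\ldots) \;=\; 11.78864\ldots,
\]
reproducing the constant in \eqref{improved_aistleitner2}. I expect no serious obstacle here: the entire argument is a plug-in, and the only mild subtlety is the separate treatment of $|u|=1$, which is disposed of by the standard DKW tail estimate.
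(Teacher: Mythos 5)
Your proposal matches the paper's own proof, which indeed just applies Proposition~\ref{Prop:Pre_Aistleitner} to a Monte Carlo sample $X$ and supplies hypothesis \eqref{cond:weighted_star} from Theorem~\ref{thm:improved_star_discrepancy_bound} with $\rho=0$, $\alpha = 0.7723$, $\beta = 10.45292$; the numerical constant $\beta+1+\log(1+1/\sqrt{2\pi}) \approx 11.78864$ is exactly as you compute. The one place you go beyond the paper is your explicit treatment of the one-dimensional projections: Theorem~\ref{thm:improved_star_discrepancy_bound} is stated for $d\ge 2$ only, so the case $|u|=1$ really does need a separate argument, and the paper simply says the corollary follows ``immediately'' without addressing it. Your DKW argument is correct -- the inequality $2\exp(-2\cdot 0.7723^2\cdot 10.45292)\le (1-q)^{1-2\cdot 0.7723^2}$ holds for all $q\in(0,1)$ because the right-hand side is at least $1$ while the left-hand side is below $10^{-5}$ -- so you have in fact supplied a small detail the paper omits rather than introduced a new route.
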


Finally, we improve  \cite{WGH19}, Theorem~3.4.

\begin{corollary}\label{Cor:WGH19+} 
Let $d,N \in \mathbb{N}$ and let $X = (X_j)_{j=1}^N \subset [0,1)^d$ be a sampling scheme such that for every $\emptyset \neq u \subseteq [d]$ its projection onto the coordinates in $u$ is $\mathcal{D}_0^{|u|}$-$e^{\rho|u|}$-negatively dependent. Then for any weights $(\gamma_u)_{\emptyset \neq u \subseteq [d]}$ and any $c>0$ we obtain
	\begin{align*} D_{N,\gamma}^*(X) \leq c \max_{\emptyset \neq u \subseteq [d]}  \gamma_u \sqrt{\frac{|u|}{N}}
	\end{align*}
with probability at least $2 - (1 + e^{-(1.67681c^2-10.45292-\rho)})^d$. For $\theta \in (0,1)$ we have
\begin{align} \label{ineq:3}
	\mathds{P}\left(  D_{N,\gamma}^*(X) \leq
	\sqrt{\frac{\rho + 10.45292 - \log((2-\theta)^{1/d}-1)}{1.67681}}
	  \max_{\emptyset \neq u \subseteq [d]} \gamma_u  \sqrt{\frac{|u|}{N}} \right) \geq \theta\,.
\end{align}
\end{corollary}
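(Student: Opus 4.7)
The plan is to reduce the weighted discrepancy bound to an application of Theorem~\ref{thm:improved_star_discrepancy_bound} on each projection $X^u$, followed by a union bound over all nonempty $u \subseteq [d]$. The key identification is that for any $z\in [0,1]^d$ and any $\emptyset \neq u\subseteq [d]$,
$$
D_N(X,(z_u,1)) = D_N(X^u,z_u),
$$
since $X_j \in [0,(z_u,1))$ iff the projection $X_j^u \in [0,z_u)$ and $\lambda_d([0,(z_u,1))) = \lambda_{|u|}([0,z_u))$. Taking the supremum over $z$ gives the clean identity $D_{N,\gamma}^*(X) = \max_{\emptyset \neq u \subseteq [d]} \gamma_u D_N^*(X^u)$.

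Next, for each nonempty $u$, the projection $X^u$ is itself a sampling scheme in $[0,1)^{|u|}$ which, by assumption, is $\mathcal{D}_0^{|u|}$-$e^{\rho |u|}$-negatively dependent. Hence Theorem~\ref{thm:improved_star_discrepancy_bound} applies to $X^u$ with dimension $|u|$ and gives, for any $c>2.4968$,
$$
\mathds{P}\!\left(D_N^*(X^u) > c\sqrt{|u|/N}\right) \leq e^{-(1.67681 c^2 - 10.45292 - \rho)\,|u|}.
$$
(For smaller values of $c$ the probability bound stated in the corollary is vacuous, since then $(1+e^{-\beta})^d \ge 2^d \ge 2$.)

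Now observe that on the event where $D_{N,\gamma}^*(X) > c\,\max_v \gamma_v \sqrt{|v|/N}$, there must exist some $u$ with $\gamma_u D_N^*(X^u) > c\,\max_v \gamma_v \sqrt{|v|/N} \ge c\,\gamma_u \sqrt{|u|/N}$, which forces $D_N^*(X^u) > c\sqrt{|u|/N}$. A union bound combined with the binomial identity therefore yields, with $\beta := 1.67681c^2 - 10.45292 - \rho$,
$$
\mathds{P}\!\left(D_{N,\gamma}^*(X) > c\,\max_v \gamma_v \sqrt{|v|/N}\right) \leq \sum_{k=1}^d \binom{d}{k} e^{-\beta k} = (1+e^{-\beta})^d - 1,
$$
giving the first assertion $\mathds{P}(\cdots \leq \cdots) \geq 2 - (1+e^{-\beta})^d$.

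Finally, to deduce \eqref{ineq:3}, I would set $\theta = 2 - (1+e^{-\beta})^d$ and solve for $c$: from $1+e^{-\beta} = (2-\theta)^{1/d}$ one gets $\beta = -\log((2-\theta)^{1/d}-1)$, and plugging back $\beta = 1.67681c^2 - 10.45292 - \rho$ gives exactly the constant stated in \eqref{ineq:3}. The only mildly delicate step is the event containment in the union bound — verifying that the weighted maximum failing forces a specific subset's unweighted failure — but this is immediate once the identity $D_{N,\gamma}^*(X) = \max_u \gamma_u D_N^*(X^u)$ is in hand. Everything else is a direct binomial summation.
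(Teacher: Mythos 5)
Your overall route is the right one and essentially what the paper implicitly invokes (the paper itself gives no proof and only refers to the argument of Theorem~3.4 in \cite{WGH19}): use the identity $D_{N,\gamma}^*(X) = \max_{\emptyset\neq u\subseteq[d]}\gamma_u D_N^*(X^u)$, apply the probabilistic bound of Theorem~\ref{thm:improved_star_discrepancy_bound} to each projection, union-bound over all $u$, collect the binomial sum $\sum_{k=1}^d\binom{d}{k}e^{-\beta k}=(1+e^{-\beta})^d-1$, and finally solve for $c$ in terms of $\theta$. The projection identity, the event containment, the vacuousness remark for $c\le 2.4968$, and the algebra leading to \eqref{ineq:3} are all correct.

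There is one gap you should close: Theorem~\ref{thm:improved_star_discrepancy_bound} is stated only for dimension $\ge 2$, and its proof genuinely uses this (inequality \eqref{eq2} is checked numerically at $d=2$ and then propagated to larger $d$ by monotonicity; it is not claimed for $d=1$). Yet in your union bound you apply it to every $\emptyset\neq u\subseteq[d]$, including the singletons $|u|=1$; and for $d=1$ the Corollary reduces entirely to that unproved case. You need a separate argument that a one-dimensional $\mathcal{D}_0^{1}$-$e^{\rho}$-negatively dependent sampling scheme satisfies $\mathds{P}\bigl(D_N^*(X^u) > c\sqrt{1/N}\bigr)\le e^{-(1.67681c^2-10.45292-\rho)}$. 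This is not hard — one can run the same $\delta$-cover plus Hoeffding/Bernstein argument directly in dimension one, where the cover has size $\lceil\delta^{-1}\rceil$ and the resulting constants are far smaller than the required ones — but as written the step is invoked outside the hypotheses of the theorem you cite, so it must be stated and justified.
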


The proof can be done exactly along the lines of the one of \cite[Theorem~3.4]{WGH19}.

\section{Future Research}\label{SEC:Future_Research}

In this last short section we would like to mention potential future research. It would be interesting to further improve our upper bounds on the bracketing number and to provide explicit bracketing and $\delta$-covers that satisfy those bounds and that can be efficiently computed.\\[12pt]
Clearly, with our bracketing result Theorem~\ref{thm:general_bracketing_number} 
or even additional refinements one may, similar as in Theorem~\ref{thm:improved_star_discrepancy_bound}, further improve other probabilistic discrepancy or dispersion bounds and extend them from Monte Carlo point sets to the larger class of $\gamma$-negatively-dependent random point sets. For instance, by using Theorem~\ref{thm:general_bracketing_number} and the dyadic chaining method from \cite{Ais11} one may substantially improve the results for the extreme discrepancy from \cite[Theorem~2.2]{Gne08}. Similarly, using Theorem~\ref{thm:general_bracketing_number} and some additional tweaks from the proofs in \cite{GH16} should lead to much smaller constants in the discrepancy bounds for the (infinite-dimensional infinite) sequences of points presented in \cite{Ais14, AW13}. 
Such an improvement would be valuable in practice, since for numerical applications it is favorable to have sequences of sample points instead of sheer sample sets that are not extensible in the number of points, cf. Remark~\ref{Rem:Matousek} and the remarks on multilevel algorithms below.\\[12pt]

Moreover, new discrepancy bounds and better $\delta$-covers may be used to improve algorithms for calculating discrepancies and for constructing point sets and sequences with small discrepancies, cf. \cite{DGW09, DGW10, DGW14, GGP20, Thi01}. It would also be interesting to investigate whether good estimates for the dispersion in combination with
better $\delta$-covers lead to competitive algorithms for calculating the dispersion of given point sets or to construct point sets with low dispersion (cf. also \cite{UV19}). 
Note that algorithms that compute the discrepancy (or dispersion) of given point sets may, in particular, be used for a ``semi-construction'' of low-discrepancy (or low-dispersion) sets: One may randomly generate a point set that has low discrepancy (or dispersion) with high probability and compute afterwards its discrepancy (or dispersion). If it is small enough, one accepts the point set, otherwise one rejects the point set and generates a new one, and so on.\\[12pt]
A more ambitious goal is to verify that certain structure-preserving randomizations of low-discrepancy point sets are $\gamma$-negatively dependent and therefore satisfy the probabilistic pre-asymptotic discrepancy bounds from Theorem~\ref{thm:improved_star_discrepancy_bound} 
with high probability. Due to the structure-preserving randomization, these points then have essentially optimal asymptotic discrepancy bounds for sure. Such point sets would be extremely  useful for numerical integration in high or infinite-dimensions. 
A particularly thrilling perspective would be to use them as integration nodes of randomized quasi-Monte Carlo algorithms that serve as building blocks for sophisticated adaptive multilevel algorithms (cf., e.g., \cite{BG14, GW09, HMNR10}): the resulting building block algorithms would from a probabilistic point of view never be worse than pure Monte Carlo algorithms (even for levels of high dimension where only a small number of samples is used), but on levels of lower dimension, where a large number of samples is used, their superior asymptotic behavior would automatically kick in.
Note that we are referring to adaptive algorithms, i.e. the meaning of \textit{low} and \textit{high} dimension as well as of \textit{small} and \textit{large} numbers of samples are relative. They may change while the algorithm is running and investing more and more resources and introducing more and more levels. That means, we cannot say beforehand whether the building block algorithms for a specific level should rather exhibit a good asymptotic behavior or preferably a good pre-asymptotic behavior. That is why building block algorithms that satisfy good asymptotic as well as pre-asymptotic bounds are highly desirable.\\[12pt]
\paragraph*{Acknowledgment} The authors would like to thank Friedrich Pillichshammer for his question which motivated Corollary~\ref{cor:expectation}, and the anonymous referees for their valuable comments. Furthermore, the first-named and the third-named author would like to thank RICAM in Linz for their hospitality where they got to know each other during the special semester on Multivariate Algorithms and their Foundations in Number Theory.

\end{document}